\def\P2{{\mathbb{P}^{2}}}
\def\P3{{\mathbb{P}^{3}}}
\def\P5{{\mathbb{P}^{5}}}
\def\P8{{\mathbb{P}^{8}}}
\def\Sh{\hom_s(\mathbb{C}^n,\mathbb{C}^n)}
\def\C{{\mathbb{C}}}
\def\R{{\mathbb{R}}}
\def\X{{\mathcal{X}}}
\def\O{{\mathcal{O}}}
\def\R{{\mathcal{R}}}
\def\P{{\mathbb{P}}}
\def\B{{\mathcal{B}}}
\DeclareMathOperator{\Projan}{Projan}
\DeclareMathOperator{\mult}{mult}
\DeclareMathOperator{\dist}{dist}
\DeclareMathOperator{\colength}{colength}
\newtheorem{theorem}{Theorem}[section]
\newtheorem{lemma}[theorem]{Lemma}     
\newtheorem{corollary}[theorem]{Corollary}
\newtheorem{definition}[theorem]{Definition}
\newtheorem{proposition}[theorem]{Proposition}
\newtheorem{remark}[theorem]{Remark}
\begin{document}

\title [Symmetric Determinantal Singularities]{Symmetric Determinantal Singularities II: Equisingularity and SEIDS}

\author[T. Gaffney]{Terence Gaffney}\thanks{T.~Gaffney was partially supported by PVE-CNPq Proc. 401565/2014-9}
 \address{T. Gaffney, Department of Mathematics\\
  Northeastern University\\
  Boston, MA 02215}

\author[M. Molino]{Michelle Molino}
\address {M. Molino, Department of Mathematics\\
  Universidade Federal Fluminense\\
  Niteroi, RJ - Brazil}

\begin{abstract}
This paper is the second part of a two part paper which introduces the study of the Whitney Equisingularity of families of Symmetric determinantal singularities. This study reveals how to use the multiplicity of polar curves associated to a generic deformation of a singularity to control the Whitney equisingularity type of these curves.
\end{abstract}

\maketitle

\selectlanguage{english}

\section*{Introduction}

In this paper and part I \cite{P1}, we study the Whitney equisingularity of families of symmetric determinantal varieties. It is part of a long term effort by several researchers to connect invariants of algebraic objects (rings, ideals and modules) associated with singularities of complex spaces to equisingularity conditions. The project took off with work of Bernard Teissier in the 70s. Teissier, in \cite{CESPCW}, in the case of families of hypersurfaces, with isolated singularities, found integral closure descriptions of equisingularity conditions, Whitney A and B, and controlled these conditions using algebraic invariants, such as multiplicities of ideals. Gaffney, in a series of papers, \cite {MEIG}, \cite{SNHS}, \cite {SIDM}, extended the results of Teissier to families of complete intersection, isolated singularities, hypersurfaces with non-isolated singularities, and then constructed a framework for dealing with isolated singularities in general in \cite{PMIPME}. The approach of \cite{PMIPME} is based on pairs of modules $M,N$, $M\subset N$. The choice of $M$ is canonical: it is the module generated by the partial derivatives of the defining equations of $X$, known as the Jacobian module. The best choice of $N$ was less well understood, as some obvious choices lead to technical difficulties in calculating invariants associated to $N$. Recent work by Gaffney and Rangachev (\cite {Gaff1}) supports the following approach adapted to the symmetric case:

In studying the equisingularity of a family of isolated singularities, fix in advance a component of the base space of the versal deformation, from which the given family is induced. This can be done explicitly in the case of determinantal singularities by fixing the size of the presentation matrix. This has the effect of fixing the generic fiber of the versal deformation to which all members of our family can be deformed. Invariants associated with the geometry/topology of this general member provide important information about the singularities in the original family. The infinitesimal deformations of $X$ induced from the fixed component then give the elements of $N$. This process of fixing a class of deformations, and thereby fixing a set of generic elements and a set of infinitesimal deformations, we call fixing the landscape of $X$. 

Determinantal singularities include complete intersection singularities and are the natural next class to study beyond them. After studying such a class of singularities one can ask if the machinery of Gaffney-Rangachev in \cite{Gaff1} extends to other classes of matrices? This paper shows the answer is yes. Given $F$, a $n\times n$ symmetric matrix, with entries in $\O_q$, we can view $F$ as a map from $\C^q\to\Sh$. Fix a set of linear maps  $S_r$ in $\Sh$ of rank less than or equal to $r$, and define $X$ as $F^{-1}(S_r)$. 

Through $F$, we can view symmetric determinantal singularities as sections of the rank singularities of a generic matrix. For the symmetric determinantal singularities, the module $N$ thus obtained is universal; that is, the $N$ for a section, is just the pull-back of $N$ for a rank singularity of the generic matrix. Further, $N$ for the rank singularities is stable; that is it agrees with the Jacobian module of the generic rank singularity. This means that the invariants associated with $N$ for $X$ can be calculated as the intersection number of the image of $F$ with the polar varieties of complementary dimension of  $S_r$. 

This approach extends to any class of sets obtained as sections of rigid singularities, and the invariants so obtained control the Whitney equisingularity of the class of deformations so defined. From these  invariants the natural generalizations of the multiplicity invariants of Teissier can be obtained.

In the first section we recall the definition for some equisingularities conditions, such as Whitney's conditions $A$ and $B$ and Verdier's condition $W$. The Whitney stratification is stated in this section together with some details about the Multiplicity Polar Theory and some ways to calculate the multiplicity of a pair of ideals or modules. 

In the second section we defined the symmetric essentially isolated singularities, SEIDS. The definition of a SEIDS was inspired on the work of Ebeling and Gusein -Zade in \cite{EGZ}.  We also prove the first results on SEIDS based on the work of Gaffey and Ruas in \cite{EAE} for EIDS.

Finally,  in the third section In Section $3$ we use the results from the last sections to provide criteria for Whitney equisingularity conditions when $\X$ is a family of Symmetric Essentially Isolated Determinantal Singularities. 

\section{Multiplicities and Whitney Equisingularity}\label{MWE}

Equisingularity is the study of the behavior of  families of sets or maps with respect to an equivalence relation. The goal is to study this behavior in order to understand the relations between the members of the family, for example, when are all the members the same or how do we tell when a family of sets are the same using invariants of the members of the family? There are many ways to see the members of a family are the same. In the complex analytic case we can use the Whitney's conditions or Verdier's W, known to be equivalent in the complex analytic case, to say when the members of a family are the same. This conditions imply all of the above possible answers. The theory of integral closure of ideals and modules provides an algebraic description of these conditions from which we may abstract the invariants which control them in families.

Let $X$ be a analytic set, $X_0$ the set of smooth points on $X$ and $Y$ a smooth subset of $X$. The pair $(X_0,Y)$ satisfies \textbf{Whitney's Condition A} at $y\in Y$ if for all sequences $\{x_i\}$ of points of $X_0$ and tangent spaces $T_{x_i}X$,
$$
\begin{array}{rcl}
\{x_i\} & \rightarrow & y\\
\{T_{x_i}X\} & \rightarrow & T
\end{array}\Rightarrow T_yY\subset T.
$$

Looking at the same set-up as in the previously case, we say that a pair $(X_0,Y)$ satisfies \textbf{Whitney's Condition B} at $y\in Y$ if for all sequences $\{x_i\}$ of points of $X_0$ and $\{y_i\}$ of points of $Y$, tangent planes $T_{x_i}X$ and secant lines $(x_i,y_i)$,
$$
\begin{array}{rcl}
\{x_i\} & \rightarrow & y\\
\{y_i\} & \rightarrow & y\\
\{T_{x_i}X\} & \rightarrow & T\\
\sec(x_i,y_i) & \rightarrow & L
\end{array}\Rightarrow L\subset T.
$$

\begin{proposition} If a pair $(X_0,Y)$ satisfies Whitney's condition B at \linebreak $y\in Y$, then it also satisfies Whitney's condition A.
\end{proposition}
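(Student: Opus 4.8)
The plan is to show that Whitney B implies Whitney A by a contrapositive-style argument: given a sequence witnessing the hypotheses of condition A, I want to produce from it a sequence witnessing the hypotheses of condition B, and then read off $T_yY \subset T$ from the conclusion of B. The key observation is that condition B's extra data—the points $y_i \in Y$ and the secant lines $\sec(x_i, y_i)$—can be manufactured, because we are free to choose the sequence $\{y_i\}$, and any nonzero vector in $T_yY$ can be realized as a limit of secant directions by moving $y_i$ appropriately within the smooth submanifold $Y$.

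\textbf{Step 1.} Start with sequences $\{x_i\} \to y$ in $X_0$ with $T_{x_i}X \to T$. By passing to a subsequence we may assume all the relevant limits exist; in particular we must show $T_yY \subset T$, so fix an arbitrary unit vector $v \in T_yY$ and it suffices to show $v \in T$.

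\textbf{Step 2.} Since $Y$ is smooth, choose a curve (or just a sequence) $y_i \in Y$ with $y_i \to y$ such that the secant directions $\sec(x_i, y_i) = \frac{x_i - y_i}{|x_i - y_i|}$ converge to a line $L$ whose direction is $v$. Concretely, one picks $y_i$ on $Y$ close enough to $y$ so that the unit vector from $y_i$ to $x_i$ is within $1/i$ of $v$; this is possible because $x_i \to y$, $T_yY$ is the tangent space to the manifold $Y$ at $y$, and one can approach $y$ along $Y$ from the direction $-v$ while $x_i$ stays correspondingly positioned. (If the ambient setup makes a direct construction awkward, the cleaner route is: it is a standard fact that $T_yY$ is contained in the tangent cone $C_y(Y)$ of secant limits of $Y$ with itself, hence in the cone of limits of secants $\sec(x_i,y_i)$ with $x_i \in X_0$, $y_i \in Y$ both tending to $y$.)

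\textbf{Step 3.} Apply Whitney's condition B to the assembled sequences $\{x_i\}$, $\{y_i\}$, $\{T_{x_i}X\} \to T$, $\sec(x_i,y_i) \to L$: the conclusion gives $L \subset T$, hence $v \in T$. Since $v \in T_yY$ was arbitrary, $T_yY \subset T$, which is condition A.

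\textbf{Main obstacle.} The only real content is Step 2: verifying that \emph{every} direction $v \in T_yY$ genuinely arises as a limit $\sec(x_i,y_i)$ for a suitable choice of $y_i \in Y$, uniformly over the given $\{x_i\}$. This is where smoothness of $Y$ is essential—one needs that near $y$, $Y$ looks like its tangent space, so that a point at parameter-distance $t$ from $y$ along $Y$ in the $-v$ direction differs from $y - tv$ by $o(t)$; then choosing $t = t_i$ comparable to $|x_i - y|$ makes the secant from $y_i$ to $x_i$ point in direction $v$ in the limit. Handling the book-keeping of these two competing scales ($|x_i - y|$ versus $t_i$) cleanly is the point to be careful about; everything else is immediate from the definitions.
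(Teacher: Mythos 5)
Your proposal follows exactly the standard argument that the paper itself does not reproduce but only cites (Mather's Proposition 2.4): manufacture the extra data of condition B --- points $y_i\in Y$ whose secants with $x_i$ converge to a prescribed line through $v\in T_yY$ --- and then read off $v\in T$ from the conclusion of B. The outline (Steps 1--3) is correct, and you correctly identify Step 2 as the only point with content.

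There is, however, one quantitative slip in how you resolve that point. Taking the displacement parameter $t_i$ merely \emph{comparable} to $|x_i-y|$, as in your closing remark, does not work: if $y_i\approx y-t_iv$ with $t_i\sim|x_i-y|$, the secant direction is the normalization of $(x_i-y)+t_iv+o(t_i)$, and the term $(x_i-y)/t_i$ neither tends to $0$ nor need point along $v$, so the secant lines need not converge to the span of $v$. You need $t_i$ to \emph{dominate} the competing scale. A clean fix: work in local coordinates in which $Y$ is linear, let $\pi$ denote orthogonal projection onto $Y$, and set $y_i=\pi(x_i)-t_iv$ with, say, $t_i=\dist(x_i,Y)^{1/2}$, so that $t_i\to0$ while $\dist(x_i,Y)/t_i\to0$. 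Then $x_i-y_i=\bigl(x_i-\pi(x_i)\bigr)+t_iv$, the first summand has norm $\dist(x_i,Y)=o(t_i)$, hence $|x_i-y_i|\geq t_i>0$ and $\sec(x_i,y_i)$ converges to the line spanned by $v$. With that correction the rest of your argument (pass to a subsequence if needed, apply condition B, conclude $v\in T$, and let $v$ range over $T_yY$) goes through.
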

\begin{proof}
Cf. proposition 2.4 in \cite{mather}.
\end{proof}

Before we state the condition $W$, it is important to define the distance between two liner spaces. Suppose $A,B$ are linear subspaces at the origin in $\C^n$. Then, the distance from $A$ to $B$ is:
$$
\dist(A,B)=\sup\frac{|\left\langle u,v\right\rangle|}{||u||||v||}
$$
where $u\in B^{\perp}-\{0\}$ and $v\in A-\{0\}$. In the applications $B$ is the "big" space and $A$ is the "small" one. The inner product is the Hermitian inner product when we work over $\C$. The same formula works over $\mathbb{R}$. 

\begin{definition} Suppose $Y\subset\overline{X}$, where $X,Y$ are strata in a stratification of a analytic space, and 
$$
\dist(T_yY,T_xX)\leq C\dist(x,Y)
$$
for all $x$ close to $Y$. Then, the pair $(X,Y)$ satisfies \textbf{Verdier's condition W} at $y\in Y$.  
\end{definition}

Conditions W says that the distance between the tangent space $T_{x_i}X$, where $x_i$ is a point of $X_0$, and the tangent space $T_yY$ goes to zero as fast as the distance between $x_i$ and $Y$.

\begin{remark} In \cite{MPSPCW}, Tessier proved (chapter $V$, thm $1.2$) that, for the complex analytic spaces, the condition $W$ is equivalent to condition $A$ and $B$. So, in this paper,  we will use these two terms interchangeably.
\end{remark}

As a first step to understanding the condition, we consider the case where $X$ is a hypersurface in $\C^n$. We would like to re-write this condition in terms of a map $F$ that defines $X$. This will allow us to develop an algebraic formulation of the $W$ condition.

\begin{theorem}\label{WC} Let $\X$ be a family of hypersurfaces in $Y\times\C^{n+1}$. Condition $W$ holds for $(\X_0,Y)$ at $0\in Y$ if, and only if, there exists a neighborhood $U$ of $(0,0)$ in $\X$ and $C>0$ such that 
$$
\displaystyle{\left|\left|\frac{\partial F}{\partial y_l}(y,z)\right|\right|\leq C\sup_{i,j}\left|\left| z_i\frac{\partial F}{\partial z_j}(y,z)\right|\right|},
$$
for all $(y,z)\in U$ and for $1\leq l\leq k$.
\end{theorem}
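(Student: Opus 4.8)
The plan is to translate the geometric condition $W$ into the language of the map $F$ by computing the tangent spaces $T_{(y,z)}\X_0$ and $T_{(y,0)}Y$ explicitly, and then to recognize the distance inequality as the assertion that one module is in the integral closure of another. Since $\X$ is a family of hypersurfaces defined by a single function $F(y,z)$ with $\X = F^{-1}(0)$, at a smooth point $(y,z)$ of $\X_0$ the tangent space $T_{(y,z)}\X$ is the kernel of $dF_{(y,z)}$, i.e.\ the hyperplane orthogonal to the gradient $\nabla F = (\partial F/\partial y_1,\dots,\partial F/\partial y_k,\partial F/\partial z_0,\dots,\partial F/\partial z_n)$. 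The stratum $Y = Y\times\{0\}$ has $T_{(y,0)}Y$ equal to the $y$-coordinate subspace. With these two descriptions in hand, I would unwind the definition of $\dist(T_yY,T_xX)$: since $T_yY$ is the ``small'' space and $T_x X$ the ``big'' one, the supremum in the definition of $\dist$ is attained by pairing the unit normal $\nabla F/\|\nabla F\|$ of $T_xX$ against unit vectors in $T_yY$, which picks out precisely the $y$-partials. Thus $\dist(T_yY,T_xX)$ is comparable to $\sup_l |\partial F/\partial y_l| \,/\, \|\nabla F\|$.

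Next I would handle the right-hand side $\dist(x,Y)$. Since $Y$ is the locus $z=0$, the distance from $x=(y,z)$ to $Y$ is just $\|z\| = \sup_i |z_i|$. So the condition $W$ inequality $\dist(T_yY,T_xX)\le C\dist(x,Y)$ becomes
$$
\sup_l\left|\frac{\partial F}{\partial y_l}\right| \le C\,\|z\|\cdot\|\nabla F\|
= C\,\|z\|\cdot\sup_{\alpha}\left|\frac{\partial F}{\partial w_\alpha}\right|,
$$
where $w_\alpha$ ranges over all variables. The point now is to reduce the full gradient on the right to only the $z$-partials. The Euler-type trick is that on $\X_0$ the $y$-partials are controlled by the $z$-partials anyway — more precisely, I would argue that $\|z\|\cdot\|\nabla F\|$ and $\sup_{i,j}\|z_i\,\partial F/\partial z_j\|$ generate the same ideal (up to integral closure) along $\X$, because the terms $z_i\,\partial F/\partial y_l$ are dominated once we already know the inequality, allowing a standard bootstrapping: the $y$-partial contributions to $\|\nabla F\|$ on the right are absorbed into the left-hand side. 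This is the algebraic heart of Teissier's reformulation, and is where one invokes that integral closure of the relevant module is being tested along curves.

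The main obstacle I expect is precisely this last reduction — showing that one may drop $z_i\,\partial F/\partial y_l$ from the right-hand side, equivalently that the module generated by the $z_i\,\partial F/\partial z_j$ already contains (in its integral closure) the $\partial F/\partial y_l$ with the factor $\|z\|$ to spare. I would do this via the curve criterion for integral closure: test on an arbitrary analytic arc $\phi(t)$ in $\X$ through $(0,0)$, where the inequality becomes an inequality of orders of vanishing in $t$, and then the offending terms $z_i(\phi(t))\,\partial F/\partial y_l(\phi(t))$ have order at least that of $\partial F/\partial y_l(\phi(t))$, which the desired inequality already bounds. One must also be careful that the stratum $Y$ is contained in the singular locus of each fiber so that $F$ and its $z$-derivatives vanish on $Y$, ensuring $\|\nabla F\|$ is itself $O(\|z\|)$-small and the comparisons are between genuinely small quantities; this uses that $\X_0$ consists of the smooth points off $Y$. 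Assembling these estimates in both directions gives the stated equivalence. A reference along these lines is Teissier \cite{MPSPCW} and the hypersurface case treated there; I would follow that template adapted to the family setting.
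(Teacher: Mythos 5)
Your argument is correct and is essentially the standard proof: the paper itself gives no argument here, deferring to \cite{SNTG}, Proposition 1.8, and that proof proceeds exactly as you describe --- identify $\dist(T_yY,T_x\X)$ with $\sup_l|\partial F/\partial y_l|/\|\nabla F\|$ using the normal direction $\nabla F$ to the hypersurface, identify $\dist(x,Y)$ with $\sup_i|z_i|$, and then absorb the terms $z_i\,\partial F/\partial y_l$ appearing in $\|z\|\,\|\nabla F\|$ back into the left-hand side. The one simplification worth noting is that this last absorption is a direct pointwise estimate (once $\|z\|<1/(2C)$, the alternative $\sup_l|\partial F/\partial y_l|\le C\|z\|\sup_l|\partial F/\partial y_l|$ forces that supremum to vanish, so the dominant term on the right must be $\sup_{i,j}|z_i\,\partial F/\partial z_j|$), so no appeal to the curve criterion for integral closure is actually needed at this step.
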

\begin{proof}
See \cite{SNTG}, proposition $1.8$.
\end{proof}

If $m_Y$ is the ideal defining $Y$, given by $m_Y=(z_1,\ldots,z_n)$, then the inequality above says that the partial derivatives of $F$ with respect to $y_l$ go to zero as fast as the ideal $m_yJM_z(\X)$. We will examine the implications of this fact in further sections.

\begin{definition}
Let $X=\bigcup X_{\alpha}$ be a stratification of $X$. Then, a \textbf{Whitney stratification} of $X$ is a stratification such that for any pair of strata $X_{\beta},X_{\alpha}$ with $X_{\alpha}\subset\overline{X_{\beta}}$, the pair $\left(\overline{X_{\beta}},X_{\alpha}\right)$ satisfies the Whitney conditions at every point $x\in X_{\alpha}$.  
\end{definition}

Whitney showed that complex analytic sets have Whitney stratifications and that these stratifications are preserved by analytic equivalence. This means that if a stratification is locally analytically trivial then it is a Whitney stratification.

Moving forward, the multiplicity of an ideal, module or pair of modules is one of the most important invariants we can associate to an $\mu$-primary module. It is intimately connected with integral closure and it has both a length theoretic definition and intersection theoretic definition.  For a pair of modules $M\subset N$ such that $M$ has finite colength in $N$ we can define the Buchsbaum-Rim multiplicity, denoted $e(M,N)$, as follows:

Let $(X,x)$ be a germ of a complex analytic space, $X$ a small representative of the germ, and  let $\O_X$ denote its structure sheaf. For simplicity we assume $X$ equidimensional. Take $M$ a subsheaf of the free sheaf $\O^p_X$, $g$ the generic rank of $M$ on each component of $X$, and suppose $M$ is also a subsheaf of $N$, which also has generic rank $g$. At a small neighborhood of any point $x$ of $X$ we can consider $M$ and $N$ as submodules of $\O^p_{X,x}$. The multiplicity of a pair of modules $M,N$ is:
$$
e(M,N)=\sum_{j=0}^{d+g-2} \int D_{M,N}\cdot C_M^{d+g-2-j}\cdot C_N^{j}.
$$ 
where $C_M$ and $C_N$ are Chern classes of tautological bundles and $D_{M,N}$ is the exceptional divisor.

Now, suppose we have a family of modules $M\subset N$, $M$ and $N$ submodules of a free module $F$ of rank $p$ on a equidimensional family of spaces with equidimensional fibers $\X^{d+k}$, $\X$ a family over a smooth base $Y^k$. We assume that the generic rank of $M$ and $N$ is $g\leq p$ on every component of the fibers. 

We will be interested in computing, as we move from the special point $0$ to a generic point, the change in the multiplicity of a pair $(M,N)$, denoted by $\Delta e(M,N)$. Let us assume that the integral closures of $M$ and $N$ agree off a set $C$ of dimension $k$ which is finite over $Y$, and assume that we are working on a sufficiently small neighborhood of the origin, so that every component $C$ contains the origin in its closure. Then $e(M,N,y)$ is the sum of the multiplicity of a pair at all points in the fiber of $C$ over $y$, and $\Delta e(M,N)$ is the change in this number from $0$ to a generic value of $y$. If we have a set $S$ which is finite over $Y$, then we can project $S$ to $Y$, and the degree of the branched cover at $0$ is $\mult_yS$, that is, the number of points in the fiber of $S$ over our generic $y$. Let $C(M)$ denote the locus of points where $M$ is not free, \textit{i.e.}, the points where the rank of $M$ is less than $g$, $C(\Projan\R(M))$ its inverse image under $\pi_M$.

\begin{theorem} Suppose in the above setup we have that $\overline{M}=\overline{N}$ off a set $C$ of dimension $k$ which is finite over $Y$. Suppose further that \linebreak $C(\Projan\R(M))(0)=C(\Projan\R(M(0)))$ except possibly at the points \linebreak which project to $0\in\X(0)$. Then, for $y$ a generic point of $Y$,
$$
\Delta(e(M,N))=\mult_y\Gamma_d(M)-\mult_y\Gamma_d(N)
$$
where $\X(0)$ is the fiber over $0$ of the family $\X^{d+k}$, $C(\Projan\R(M))(0)$ is the fiber of $C(\Projan\R(M))$ over $0$ and $M(0)$ is the restriction of the module $M$ to $\X(0)$.
\end{theorem}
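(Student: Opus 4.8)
The plan is to prove this by interpreting both sides as intersection numbers on the Projan of the Rees algebra of $M$ (resp. $N$) and comparing the special and generic fibres of the family — this is the Multiplicity Polar Theorem, so I would follow the strategy that proves it. First, fixing $p$ generators of $N$ (hence of $M\subseteq N$), form $\Projan\R(M)\subset\X\times\P^{p-1}$ with projection $\pi_M$ and tautological class $C_M=c_1\O(1)$; since the generic rank of $M$ is $g$ this space has dimension $d+k+g-1$, and capping with $C_M^{\,d+g-1}$ (i.e.\ intersecting with a generic linear subspace of $\P^{p-1}$ of codimension $d+g-1$) and pushing forward to $\X$ produces the relative polar variety $\Gamma_d(M)$, of dimension $k$ and finite over $Y$; by the projection formula $\mult_y\Gamma_d(M)$ equals the intersection number $\int C_M^{\,d+g-1}\cdot\pi_M^{*}[\X(y)]$, and similarly for $N$.

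Next I would choose a resolution $\beta\colon B\to\X$ dominating both $\Projan\R(M)$ and $\Projan\R(N)$, on which $M\O_B$ and $N\O_B$ are invertible with exceptional divisors $E_M\geq E_N$, so that $D_{M,N}:=E_M-E_N\geq 0$ is effective and, because $\overline M=\overline N$ off $C$, supported over $C$. Pulling $C_M,C_N$ back to $B$ and using $C_M-C_N=-[D_{M,N}]$ together with the telescoping identity $a^{m}-b^{m}=(a-b)\sum_{j=0}^{m-1}a^{\,m-1-j}b^{\,j}$ with $m=d+g-1$, the difference $\mult_y\Gamma_d(M)-\mult_y\Gamma_d(N)$ becomes $\int_B[D_{M,N}]\cdot\bigl(\sum_{j=0}^{d+g-2}C_M^{\,d+g-2-j}C_N^{\,j}\bigr)\cdot\beta^{*}[\X(y)]$, which is exactly the Buchsbaum--Rim expression in the excerpt, read fibrewise. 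The point is that this intersection number, being computed over a germ, is not constant along $Y$: over a generic fibre $\beta^{-1}(\X(y))$ is the correct resolution of $\X(y)$ and the number reads off the pair-multiplicity data supported on $C(y)$, whereas over the special fibre $\beta^{-1}(\X(0))$ carries extra components on which the Chern polynomial contributes, and comparing the two readings produces precisely the jump $\Delta(e(M,N))$.

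The step I expect to be the genuine obstacle is the analysis of the special fibre: one must show that the only extra components of $\beta^{-1}(\X(0))$ relative to a resolution of $\X(0)$ are concentrated over the origin of $\X(0)$, and identify their contribution. This is exactly what the hypothesis $C(\Projan\R(M))(0)=C(\Projan\R(M(0)))$, valid away from the fibre of $0\in\X(0)$, is designed for: it says that forming $\Projan\R$ commutes with restriction to $\X(0)$ except over the origin, so the discrepancy between the special and generic readings localizes at the origin, and the principle of specialization of integral dependence converts that localized discrepancy into the claimed difference of polar multiplicities. The remaining technical points — which I would handle along the way — are choosing a single generic linear subspace of $\P^{p-1}$ that simultaneously computes $\Gamma_d(M)$, $\Gamma_d(N)$ and the polar varieties of the special fibres (so the contributions of the special fibres themselves match and cancel in the difference, using Kleiman transversality and that $\overline{M(0)}=\overline{N(0)}$ off the finite set $C(0)$), and checking that $B$ and $D_{M,N}$ are compatible with the projection to $Y$.
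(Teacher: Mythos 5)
The paper does not actually prove this statement: it is the Multiplicity Polar Theorem, and the ``proof'' given is a citation to \cite{PMIPME} for the ideal case and to \cite{MPFE} for the general case, so your attempt must be judged on its own. The ingredients you assemble --- tautological classes on $\Projan\R(M)$ and $\Projan\R(N)$, a common resolution $B$ carrying the divisor $D_{M,N}$, the telescoping identity converting $C_M^{d+g-1}-C_N^{d+g-1}$ into $[D_{M,N}]\cdot\sum_j C_M^{d+g-2-j}C_N^{j}$, and a conservation-of-number comparison of the fibers over $0$ and over a generic $y$ --- are indeed the ingredients of the known proofs. But the sketch contains an intermediate identity that cannot be right as stated, and it defers the step that constitutes the actual theorem.

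The problematic identity is the claim that $\mult_y\Gamma_d(M)-\mult_y\Gamma_d(N)$ equals $\int_B[D_{M,N}]\cdot\bigl(\sum_j C_M^{d+g-2-j}C_N^{j}\bigr)\cdot\beta^{*}[\X(y)]$, ``read fibrewise.'' Read fibrewise at a generic $y$ that integral is exactly the Buchsbaum--Rim expression for the pair on the fiber, i.e.\ it is $e(M,N,y)$, whereas the theorem asserts that the difference of polar multiplicities equals $e(M,N,0)-e(M,N,y)$. Relatedly, the quantity $\int C_M^{d+g-1}\cdot\pi_M^{*}[\X(y)]$ is not a well-defined intersection number on a single non-compact fiber germ: the zero-cycle $C_M^{d+g-1}\cap[\Projan\R(M(y))]$ is not supported on a finite set, and the polar multiplicities in the statement only make sense as degrees over $Y$ of the relative polar $\Gamma_d(M)\subset\X$, which is finite over $Y$. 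The entire content of the theorem is the comparison of the degrees over $Y$ of the relevant $k$-cycles at $0$ versus at generic $y$: one must show that the only failure of conservation for $\beta_{*}(C_M^{d+g-1}\cap[B])$ and for $[D_{M,N}]\cdot\sum_j C_M^{d+g-2-j}C_N^{j}$ is concentrated over $0\in\X(0)$ (this is where the hypothesis $C(\Projan\R(M))(0)=C(\Projan\R(M(0)))$ enters), and that the two discrepancies combine to give exactly $\Delta e(M,N)$. That is precisely the step you flag as ``the genuine obstacle'' and do not carry out; since it is the theorem, the proposal is an outline of the right strategy rather than a proof. To complete it one needs the specialization results for the Buchsbaum--Rim multiplicity of Gaffney--Kleiman \cite{SIDM} and Kleiman--Thorup \cite{GTBRM}, which is how \cite{PMIPME} and \cite{MPT} actually proceed.
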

\begin{proof}
The proof in the ideal case appears in \cite{PMIPME}; the general proof will appear in \cite{MPFE}.
\end{proof}

\section{First Results on SEIDS}

The essentially Isolated Determinantal Singularities, also known as EIDS, were defined by Ebeling and Gusein-Zade. In their definition, a germ $(X,0)$ contained in $\C^q$ of a determinantal variety of type $(n+k,n,t)$, see \cite{EGZ} for details, is an EIDS if it has only essentially isolated singular points in a punctured neighborhood of the origin in $X$. In this section we are going to give similar definitions for the symmetric case and prove some results about equisingularity in this case.

For this, let $F$ be a map from $\C^q$ to $\Sh$ whose entries are complex analytic functions such that $f_{ij}(x)=f_{ji}(x)$, and suppose, without loss of generallity that $F(0)=0$. Thus, $X$ is a symmetric determinantal variety given by $X=F^{-1}(S_r)$. As we have seen, the representation of $S_r$ as the union 
$$
S_r=\bigcup_{i=0}^r S_i\backslash S_{i-1}
$$  
is a Whitney stratification of $S_r$. If the map $F$ is transverse to the rank stratification of $\Sh$ for $x\neq 0$, then the germ of $X$ at a singular point is holomorphic to either the product of $S_r$ with a affine space or a transverse slice of $S_r$. This inspires the following definition:

\begin{definition} A point $x\in X=F^{-1}(S_r) $ is called symmetric essentially non singular if, at the point $x$, the map $F$ is transversal to the corresponding stratum of $S_r$.
\end{definition}

A germ $(X,0)\subset(\C^q,0)$ of a symmetric determinantal variety $S_r$ is a \textit{symmetric essentially isolated determinantal singularity}, SEIDS, if it has only essentially non-singular points in a punctured neighborhood of the origin in $X$, that is, $F$ is transverse to all strata $S_i\backslash S_{i-1}$ of the stratification of $S_r$ in a punctured neighborhood of the origin. The singular set of a SEIDS $X=F^{-1}(S_r)$ is the SEIDS $F^{-1}(S_{r-1})$.

\begin{definition} A  deformation $\widetilde{F}: U \subset  \C^q   \to \Sh$ of $F$ which is transverse to the rank stratification is called a symmetric stabilization of $F$. The variety $\X=\widetilde{F}^{-1}(S_r)$ is an symmetric essential smoothing of $X$ (\cite[Section 1]{EGZ}). If $\X$ is smooth, then $\X$ is a symmetric  smoothing of $X$.
\end{definition}

In this paper, unless stated otherwise, all smoothings and stabilizations are symmetric smoothings and stabilizations.

\begin{proposition} A SEIDS $(X,0)\subset\C^q$ has an isolated singularity at the origin if and only if
$$
q\leq\frac{(n-r+1)(n-r+2)}{2}.
$$ 
\end{proposition}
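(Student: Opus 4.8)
The plan is to reduce the whole statement to a dimension count on the analytic germ $F^{-1}(S_{r-1})$, using the identification $\Sing X = F^{-1}(S_{r-1})$ recorded just above the proposition. Saying that $X$ has an isolated singularity at the origin means precisely that $\{0\}$ is isolated in $F^{-1}(S_{r-1})$, i.e. that this germ is $0$-dimensional at $0$. The numerology is governed by the classical fact that the symmetric rank variety $S_s\subset\Sh$ is irreducible of codimension $\tfrac{(n-s)(n-s+1)}{2}$; in particular $c:=\codim_{\Sh}S_{r-1}=\tfrac{(n-r+1)(n-r+2)}{2}$, while $\Sh$ itself has dimension $\tfrac{n(n+1)}{2}$.

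For the ``only if'' direction I would not use the SEIDS hypothesis at all. Since $F(0)=0\in S_{r-1}$, the germ $F^{-1}(S_{r-1})$ is isomorphic, via the first projection, to the intersection of the graph of $F$ with $\C^q\times S_{r-1}$ inside the smooth germ $(\C^q\times\Sh,(0,0))$, so the standard lower bound for intersection dimension in a smooth ambient space gives
$$
\dim_0 F^{-1}(S_{r-1})\ \ge\ q+\dim_0 S_{r-1}-\dim\Sh\ =\ q-c .
$$
Hence if $q>\tfrac{(n-r+1)(n-r+2)}{2}$ the singular locus of $X$ has positive dimension at $0$, and the singularity is not isolated.

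For the ``if'' direction I would assume $q\le c$; here the SEIDS condition does the work. Decompose $S_{r-1}=\bigcup_{i=0}^{r-1}(S_i\setminus S_{i-1})$ along the rank stratification and bound each $F^{-1}(S_i\setminus S_{i-1})$ away from the origin. For $x\neq 0$ near $0$ with $F(x)\in S_i\setminus S_{i-1}$, transversality of $F$ to that stratum (the defining property of a SEIDS on a punctured neighborhood) makes $F^{-1}(S_i\setminus S_{i-1})$ a submanifold near $x$ of dimension $q-\codim S_i$; and if $q<\codim S_i$ no such $x$ exists, since $\Ima dF_x$ and $T_{F(x)}(S_i\setminus S_{i-1})$ cannot span $\Sh$ for dimension reasons. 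As $\codim S_i=\tfrac{(n-i)(n-i+1)}{2}\ge c\ge q$ for every $i\le r-1$, each piece is empty or discrete off the origin, so $F^{-1}(S_{r-1})\setminus\{0\}$ is at most $0$-dimensional; a component of the analytic germ $F^{-1}(S_{r-1})$ of positive dimension through $0$ would carry points arbitrarily near but distinct from $0$, which is now excluded. Therefore $F^{-1}(S_{r-1})$ is $0$-dimensional at $0$, i.e. $X$ has an isolated singularity there.

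The argument is short, and I expect the only real care needed to be bookkeeping: pinning down $\codim_{\Sh}S_{r-1}=\tfrac{(n-r+1)(n-r+2)}{2}$ and the monotonicity $\codim S_i\ge \codim S_{r-1}$ for $i\le r-1$, together with the local-to-global step in the ``if'' direction, where transversality holds only on a punctured neighborhood, so one first produces, near each $x\neq 0$, a neighborhood on which $F^{-1}(S_i\setminus S_{i-1})$ has the expected dimension and only then globalizes. Degenerate values of $r$ (e.g. $r=0$, where $S_{r-1}=\emptyset$ and the statement is vacuous) can be set aside at the outset.
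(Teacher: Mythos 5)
Your proposal is correct and follows the same route as the paper: identify $\Sing X$ with $F^{-1}(S_{r-1})$ and reduce to the dimension count $q-\frac{(n-r+1)(n-r+2)}{2}\leq 0$. The paper simply asserts that $F^{-1}(S_{r-1})$ has the expected codimension $\frac{(n-r+1)(n-r+2)}{2}$; you supply the justification the paper leaves implicit (the intersection-dimension lower bound for the ``only if'' direction and transversality off the origin for the ``if'' direction), which is a welcome but not essentially different elaboration.
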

\begin{proof}
The singular set of $X$ is the variety $F^{-1}(S_{r-1})$ of codimension \linebreak $\frac{(n-r+1)(n-r+2)}{2}$. Therefore, the origin is an isolated singularity if and only if  $F^{-1}(S_{r-1})$ has dimension less than equal to $0$, which means
$$
\begin{array}{rcl}
\dim F^{-1}(S_{r-1})\leq 0 & \Leftrightarrow & q-\frac{(n-r+1)(n-r+2)}{2}\leq 0\\
&&\\
&\Leftrightarrow& q\leq\frac{(n-r+1)(n-r+2)}{2}.
\end{array}
$$
\end{proof}

In the sequel we shall consider deformations of symmetric determinantal singularities which are themselves symmetric determinantal ones, i.e., they are defined by perturbations of the map $F$ defining the singularity. In \cite{P1}, for the curve and surface cases we considered $\X$ a smoothing of $X$. However, it is not true that we always can get a smoothing from a SEIDS. The next lemma shows the conditions needed to have such deformation. 

\begin{proposition} $(X,0)$ has a smoothing if and only if 
$$
q<\frac{(n-r+1)(n-r+2)}{2}.
$$ 
\end{proposition}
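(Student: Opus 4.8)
The plan is to reduce everything, exactly as in the two preceding propositions, to the geometry of the locus $\widetilde F^{-1}(S_{r-1})$ attached to a symmetric stabilization $\widetilde F$ of $F$. The fact to set up first is that, whenever $\widetilde F_t$ is transverse to the rank stratification, $\Sing\big(\widetilde F_t^{-1}(S_r)\big)=\widetilde F_t^{-1}(S_{r-1})$: off $S_{r-1}$ the stratum $S_r\setminus S_{r-1}$ is a manifold and transversality makes $\widetilde F_t^{-1}(S_r)$ smooth there, while over a deeper stratum $S_i\setminus S_{i-1}$, $i\le r-1$, the germ of $\widetilde F_t^{-1}(S_r)$ is a transverse slice of $S_r$, which (for $1\le r\le n-1$) is a nontrivial determinantal cone, hence singular. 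This is precisely the dichotomy, and the statement ``$\Sing X = F^{-1}(S_{r-1})$'', already recorded before the definition of SEIDS. Consequently a fibre $\widetilde F_t^{-1}(S_r)$ is smooth if and only if $\widetilde F_t^{-1}(S_{r-1})=\emptyset$.

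For the implication ``$\Leftarrow$'', assume $q<\frac{(n-r+1)(n-r+2)}{2}$ and pick a generic symmetric stabilization $\widetilde F$ of $F$ (its existence is the symmetric analogue of the construction of essential smoothings in \cite{EGZ}; see also \cite{EAE}). For $t\ne 0$ the map $\widetilde F_t$ is transverse to the rank stratification, so $\widetilde F_t^{-1}(S_{r-1})$ is, where nonempty, of codimension $\frac{(n-r+1)(n-r+2)}{2}$ in $\C^q$, i.e. of dimension $q-\frac{(n-r+1)(n-r+2)}{2}<0$; hence it is empty, and by the first paragraph $\widetilde F_t^{-1}(S_r)$ is smooth. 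Since $\widetilde F_t$ is close to $F$ and $X$ is nonempty of positive dimension, this fibre is a nonempty deformation of $X$, hence a smoothing.

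For ``$\Rightarrow$'' I would argue by contradiction. Suppose $q\ge\frac{(n-r+1)(n-r+2)}{2}$ and that $X$ has a smoothing, realized by a symmetric stabilization $\widetilde F\colon B_\epsilon\times D\to\Sh$ with $\widetilde F_0=F$ and $\widetilde F_t^{-1}(S_r)\cap B_\epsilon$ smooth for $0<|t|<\delta$ (shrink $B_\epsilon$ and $D$ so that this holds and $0$ is the only special value). By the first paragraph $\widetilde F_t^{-1}(S_{r-1})\cap B_\epsilon=\emptyset$ for all such $t$, so the total locus $Z:=\widetilde F^{-1}(S_{r-1})\cap(B_\epsilon\times D)$ meets no fibre over $D\setminus\{0\}$, whence $Z\subset B_\epsilon\times\{0\}$ and $Z=\big(F^{-1}(S_{r-1})\cap B_\epsilon\big)\times\{0\}$; this set has dimension $q-\frac{(n-r+1)(n-r+2)}{2}$, since $F^{-1}(S_{r-1})$ is the singular set of the SEIDS $X$. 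On the other hand $Z$ is the preimage, under a holomorphic map from the $(q+1)$-dimensional space $B_\epsilon\times D$, of a subvariety of $\Sh$ of codimension $\frac{(n-r+1)(n-r+2)}{2}$, and $(0,0)\in Z$, so $\dim Z\ge (q+1)-\frac{(n-r+1)(n-r+2)}{2}$. These two computations are incompatible, so no smoothing exists when $q\ge\frac{(n-r+1)(n-r+2)}{2}$.

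Both facts I lean on --- the identification of the singular locus under transversality and the codimension of $F^{-1}(S_{r-1})$ --- are already recorded in the excerpt, so the argument is short once they are in hand; the one point needing genuine care is the family-level bookkeeping in ``$\Rightarrow$'', namely that a smoothing always carries a one-parameter deformation with smooth nearby fibres, which is exactly what pushes $Z$ into the central fibre and makes the two dimension estimates collide. A secondary obligation is the existence of symmetric stabilizations and the nonemptiness of the smooth fibre in ``$\Leftarrow$'', for which I would invoke the symmetric counterparts of the EIDS results of \cite{EGZ} and \cite{EAE}.
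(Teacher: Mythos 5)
Your proof is correct and follows essentially the same route as the paper's: smoothness of a nearby fibre is equivalent to emptiness of $\widetilde F_t^{-1}(S_{r-1})$, and transversality forces that locus to have the expected dimension $q-\frac{(n-r+1)(n-r+2)}{2}$, so emptiness is equivalent to that number being negative. The paper compresses both directions into this one dimension count, whereas you make the ``only if'' direction explicit via the two incompatible estimates for $Z=\widetilde F^{-1}(S_{r-1})$; that is a genuine improvement in rigor, but note that the lower bound $\dim_{(0,0)}Z\ge (q+1)-\frac{(n-r+1)(n-r+2)}{2}$ is \emph{not} a consequence of codimension alone (preimages of arbitrary codimension-$c$ subvarieties can be smaller than expected) --- it holds here because $S_{r-1}$ is a symmetric determinantal locus, for which the classical Macaulay--Eagon--Kutz bound guarantees the rank $\le r-1$ locus of a symmetric matrix of holomorphic functions has codimension at most $\binom{n-r+2}{2}$ at every point, so you should cite that fact rather than the bare codimension of $S_{r-1}$.
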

\begin{proof}
If $\X$ is a smoothing of $X$ then all the fibers $\X(y)$, $y\neq 0$, are smooth, which means that $\widetilde{f}_y^{-1}(S_{r-1})$, where $\widetilde{f}_y$ is the restriction to the fiber $\X(y)$, is an empty set. Therefore, $\X$ is a smoothing if and only if
$$
\dim\widetilde{f}_y^{-1}(S_{r-1})<0\Leftrightarrow q<\frac{(n-r+1)(n-r+2)}{2}.
$$
\end{proof}

In the SEIDS context, instead of a smoothing, we are going to consider $\X$ as a stabilization, which means, $\X$ is a symmetric determinantal deformation of $X$ to the generic fiber. 

Let $Y$ be the parameter space which is embedded in $\X$ as a linear subspace. Let us also consider $_iX$ as the preimage $F^{-1}(S_i)$ for all $i=1,\ldots,r$ and, consequently, $_i\X=\widetilde{F}^{-1}(S_i)$. For simplicity of exposition we impose a condition on our families that will be next stated. 

\begin{definition} A $s$-parameter family $\X$ of SEIDS is good if there exists a neighborhood $U$ of $Y$ such that the restriction map $F_{\X(y)}$ to the fiber $\X(y)$ is transverse to the rank stratification off the origin for all $y\in U$.
\end{definition}

\begin{proposition}\label{good} Suppose $\X$ is a good family of SEIDS. Then $\{_i\X\backslash Y\}$, $i=1,\ldots,t$ is a locally analytically trivial stratification of $\X\backslash Y$.
\end{proposition}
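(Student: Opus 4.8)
The plan is to prove the statement pointwise: around every point of $\X\setminus Y$ we exhibit an analytic isomorphism carrying the flag $\{_i\X\setminus Y\}$ onto the product of a smooth factor with a fixed transverse determinantal model. Since local analytic triviality is a local property and, by the remark of Whitney recalled above, entails the Whitney conditions, this is exactly what has to be shown.

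First I would pass from the fiberwise transversality in the definition of a good family to transversality of the full deformation map. Fix $p\in{}_i\X\setminus Y$, so $\widetilde{F}(p)\in S_i\setminus S_{i-1}$ and, writing $p=(y_0,x_0)$, we have $x_0\neq 0$ since $p\notin Y$. Goodness gives that the restriction of $\widetilde{F}$ to the fiber $\X(y_0)$ is transverse to the rank stratification at $p$, and, since every point of $\X$ near $p$ still lies off the origin in its own fiber, at all points of $\X$ in a neighborhood of $p$ as well. Because the source of $\widetilde{F}$ has more tangent directions than a single fiber, transversality of the fiberwise restriction forces transversality of $\widetilde{F}$ itself. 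Hence $\widetilde{F}$ is transverse to each stratum $S_j\setminus S_{j-1}$ throughout a neighborhood $W$ of $p$; consequently $_j\X\cap W=\widetilde{F}^{-1}(S_j)\cap W$, each $_j\X\setminus{}_{j-1}\X$ is a smooth submanifold of $W$ of the expected codimension, and $\{_j\X\cap W\}$ is an analytic stratification of $\X\cap W$.

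Next I would feed in the geometry of the model $S_r$. The rank stratification $\{S_j\setminus S_{j-1}\}$ of $\Sh$ is invariant under, and homogeneous for, the congruence action $A\mapsto gAg^t$ of $GL_n(\C)$, which is transitive on each stratum; moreover at a matrix $A_0$ of rank exactly $i$ the germ of the stratified pair $(S_r,S_i)$ is analytically isomorphic, stratum to stratum, to the product of the germ of the stratum $(S_i\setminus S_{i-1},A_0)$ with the germ at the origin of the rank-$(r-i)$ symmetric determinantal variety in $\hom_s(\C^{n-i},\C^{n-i})$ --- this is the transverse-slice description of symmetric determinantal varieties established in \cite{P1}. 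Thus the model is itself locally analytically trivial. I would then pull this product structure back through the transverse map $\widetilde{F}$ on $W$: using the rank theorem to straighten $\widetilde{F}^{-1}(S_i\setminus S_{i-1})$ into a coordinate plane $\C^m\times\{0\}$ and to make $\widetilde{F}$, in the complementary directions, a submersion onto a transverse slice of $S_i$ at $A_0$, and then composing with the trivialization of $(S_r,S_i)$ along $S_i$, one obtains an analytic isomorphism $(\X\cap W,p)\cong(\C^m,0)\times(Z,0)$ compatible with the flag $\{_j\X\}$, in which $\C^m\times\{0\}$ is the stratum $_i\X\setminus Y$ near $p$ and $Z$ is a transverse slice of $\X$ at $p$, namely the pull-back, by a germ transverse to the rank stratification, of the rank-$(r-i)$ symmetric determinantal variety in $\hom_s(\C^{n-i},\C^{n-i})$. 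As $p$ ranges over $\X\setminus Y$ this is the desired local analytic trivialization, which in particular makes $\{_i\X\setminus Y\}$ a Whitney stratification.

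The hard part will be this last pull-back step: to get \emph{analytic} and not merely topological triviality one cannot simply invoke the Thom--Mather first isotopy theorem, but must carefully combine the rank theorem with the fact that the transverse-slice type of $S_r$ is \emph{constant} along each of its strata. The one genuine external input is the explicit transverse-slice structure of $S_r$ for symmetric matrices supplied by Part I \cite{P1}; granting it, the argument is the symmetric counterpart of the EIDS case treated by Gaffney and Ruas in \cite{EAE}.
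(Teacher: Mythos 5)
Your proposal is correct and follows essentially the same route as the paper: goodness gives transversality of $\widetilde{F}$ to the rank stratification off $Y$, the rank strata of $S_r$ are themselves locally analytically trivial (by homogeneity under the congruence action and the transverse-slice structure), and the triviality is pulled back through the transverse map, exactly as in the EIDS case of \cite{EAE}. The paper's own proof is a two-line sketch of this argument; yours simply supplies the details it leaves implicit.
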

\begin{proof} The proof is similar to the rectangular case presented in \cite{EAE}. From the definition of good family for $i$, $\{_i\X\backslash Y\}$ is smooth, since $F$ is transverse to the rank stratification. That the strata are locally analytically trivial follows because the strata by the $S_i$ are and $F$ is transverse to these. 
\end{proof}

This condition implies that in a $s$-parameter family, no set of points from $_i\X$ can split off from the origin if the expected dimension of $_i\X$ is less than $s$. It also implies that there cannot be subsets of points where $\{_i\X\backslash Y\}$ is singular which include the origin in their closure even if the expected dimension of $_i\X$ is greater than or equal to $s$.

One of the invariants we are interested in is the multiplicity of the pair presented in section \ref{MWE}. First, we need to show that such multiplicity \linebreak $e(JM(X),N(X))$ is well-defined for SEIDS. We will proceed as Gaffney and Ruas in \cite{EAE} for the EIDS case, that is, we will show that the transversality of $F$ to the strata of $\Sh$ off the origin implies that off the origin the two modules agree. To show that the multiplicity is well defined, we will use the fact that our stratification is Whitney.

\begin{lemma} Let $F:(\C^q,0)\longrightarrow (\Sh,x)$ be a germ map with $x\in S_r$ and $X=F^{-1}(S_r)$. Suppose $F$ is transverse to the stratum containing $x$. Then, 
$$
F^*(JM(S_r))=JM(X).
$$ 
\end{lemma}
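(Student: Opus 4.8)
The plan is to establish the identity $F^*(JM(S_r)) = JM(X)$ by a direct local computation, exploiting transversality of $F$ to the stratum $S_i\backslash S_{i-1}$ through $x$ to control how the Jacobian module pulls back. First I would recall that for a determinantal-type variety defined locally as the zero locus of the minors (symmetric minors of size $r+1$) of the generic symmetric matrix, the Jacobian module $JM(S_r)$ is generated by the partial derivatives of these defining equations with respect to the coordinates on $\Sh$. Since $X = F^{-1}(S_r)$ and $F$ is transverse to the stratum containing $x$, near $x$ the variety $X$ is defined exactly by the pullbacks $g\circ F$ of the defining equations $g$ of $S_r$; transversality guarantees there is no drop in the expected codimension, so no extraneous equations or fat structure appear and $X$ has the "right" dimension. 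This is the geometric content we get for free from the hypothesis.

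Next I would compute $JM(X)$ via the chain rule. If $g_1,\dots,g_m$ generate the ideal of $S_r$ near $x$, then $X$ near $0$ is cut out by $g_\alpha\circ F$, and
\[
\frac{\partial (g_\alpha\circ F)}{\partial x_j} = \sum_{a} \frac{\partial g_\alpha}{\partial u_a}(F(x))\cdot \frac{\partial F_a}{\partial x_j}(x),
\]
where $u_a$ are the coordinates on $\Sh$. Written in module language, the generators of $JM(X)$ are obtained by applying the Jacobian matrix $dF$ to the generators of $F^*(JM(S_r))$. So one inclusion, $JM(X)\subseteq F^*(JM(S_r))$ composed with $dF$, is essentially automatic; the real work is the reverse — showing that composition with $dF$ does not shrink the module, i.e. that $F^*(JM(S_r))$ and $JM(X)$ generate the same submodule of the appropriate free module over $\O_{X,0}$. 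Here is where transversality enters decisively: transversality of $F$ to the stratum means $dF$ is surjective onto a complement of the tangent space to the stratum, which is precisely the span (modulo the ideal) of the relevant partial derivatives of the $g_\alpha$; this surjectivity lets one lift generators of $F^*(JM(S_r))$ back through $dF$ up to elements already in $JM(X)$.

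I would organize the reverse inclusion as follows: work at a point $x\in S_i\backslash S_{i-1}$, use the fact (recalled earlier in the paper, since the stratification of $S_r$ by $S_i\backslash S_{i-1}$ is Whitney) that near such a point $S_r$ is, up to biholomorphism, a product of the stratum with a transverse slice; then transversality of $F$ makes $F^{-1}(S_r)$ near $x$ a product of $F^{-1}(\text{stratum})$ with the pullback of that transverse slice, and the Jacobian module is compatible with such product/submersion descriptions. Concretely, choose coordinates so that $F$ splits as a submersion onto the stratum directions times the slice; then both $JM(X)$ and $F^*(JM(S_r))$ reduce to the Jacobian module of the transverse slice pulled back isomorphically, and they coincide. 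The main obstacle I anticipate is bookkeeping the module structure correctly: $JM(S_r)$ and $JM(X)$ are modules over different rings ($\O_{S_r}$ versus $\O_X$) sitting in free modules of different ranks (the number of defining equations can differ in the "$F^*$" picture if $q$ differs from $\dim\Sh$), so one must be careful that "$F^*(JM(S_r)) = JM(X)$" is read as an equality of submodules of $\O_{X,0}^{\,p}$ after the natural identification, and that transversality is exactly what makes the pullback of a generating set a generating set rather than merely a spanning set of a possibly smaller module. Once the product structure from transversality is set up, the equality is a formal consequence of the chain rule, so the entire difficulty is concentrated in making that local normal form precise.
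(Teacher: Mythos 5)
Your argument is correct and is in substance the same as the paper's: the paper simply cites Lemma 2.10 of \cite{EAE} together with the local analytic triviality of the rank stratification, and your chain-rule computation plus the product normal form coming from transversality is exactly the content of that cited lemma, transplanted to the symmetric case. The one inclusion $JM(X)\subseteq F^*(JM(S_r))$ is, as you say, automatic from the chain rule, and your use of local triviality to see that $JM(S_r)$ is generated by the partials in the directions transverse to the stratum (so that transversality of $F$ lets you recover them as partials of $g\circ F$) is precisely the missing reverse inclusion.
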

\begin{proof}
It follows directly from lemma $2.10$ of \cite{EAE} and the fact that 
$$
\bigcup_{i=0}^r  \{_iX\backslash _{i-1}X\}
$$ 
is a locally trivial stratification for $X$.
\end{proof}  

\begin{proposition}\label{stable} Let $F:\C^q\longrightarrow \Sh$, $X=F^{-1}(S_r)$, $F$ transverse to $S_r$ at the origin. Then,
$$
JM(X)=F^*(N(S_r))=N(X).
$$
\end{proposition}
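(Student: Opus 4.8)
The plan is to split the claimed chain of equalities into its two halves and handle each by importing one structural fact. The right‑hand equality $F^*(N(S_r)) = N(X)$ is, in the landscape fixed here, the defining (universality) property of $N(X)$: for a section $X = F^{-1}(S_r)$ the module $N$ is by construction the pull‑back under $F$ of the module $N(S_r)$ attached to the rank singularity $S_r$ of the generic symmetric matrix, as recorded in the Introduction. So for this half I only need to observe that the hypothesis that $F$ is transverse to $S_r$ at the origin guarantees that $F^*(N(S_r))$ has the expected generic rank and finite colength in the relevant free module on $X$, so that it is a legitimate representative of $N(X)$; equivalently, transversality lets the ambient infinitesimal deformations of $S_r$ induced from the fixed component be pulled back to first‑order deformations of $X$ of the same type.

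For the left‑hand equality $JM(X) = F^*(N(S_r))$ I would use the stability of $N$ for the rank singularities, i.e. $N(S_r) = JM(S_r)$ (again recorded in the Introduction). Granting this, the claim reduces to $JM(X) = F^*(JM(S_r))$, which is exactly the preceding Lemma: transversality of $F$ to the strata of $S_r$ near the origin, together with the fact that $\bigcup_{i=0}^{r}\{\,{}_iX\backslash{}_{i-1}X\,\}$ is a locally analytically trivial stratification of $X$, gives $F^*(JM(S_r)) = JM(X)$. Chaining the three steps, $JM(X) = F^*(JM(S_r)) = F^*(N(S_r)) = N(X)$, which is the assertion.

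I expect the real content — and the main obstacle — to be the stability $N(S_r) = JM(S_r)$. One inclusion, $JM(S_r)\subset N(S_r)$, is automatic from the way the pair $M\subset N$ is set up; the reverse inclusion is where the very special geometry of $S_r$ enters. Since $S_r$ is already the generic member of its landscape of $n\times n$ symmetric matrices, it coincides with its own generic symmetric determinantal perturbation and hence carries no non‑trivial infinitesimal deformations coming from that component, so the module $N(S_r)$, built out of exactly those deformations, collapses onto $JM(S_r)$. In the write‑up I would either cite the corresponding statement from part I \cite{P1} or deduce it from the rigidity of the symmetric determinantal rank singularities. A smaller technical point to treat with care is the phrase ``transverse to $S_r$ at the origin'': because $F(0)=0$ lies in the deepest stratum $S_0$, one cannot literally invoke the preceding Lemma at $0$, but rather applies it at the points of a punctured neighbourhood where $F$ is transverse to the corresponding strata, the passage to the whole germ at $0$ being supplied, as in that Lemma's proof, by the local triviality of the stratification of $X$.
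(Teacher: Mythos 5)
Your proposal is correct and follows essentially the same route as the paper's own proof: stability of $S_r$ giving $JM(S_r)=N(S_r)$, the preceding lemma giving $F^*(JM(S_r))=JM(X)$, and universality of $N$ giving $F^*(N(S_r))=N(X)$, chained together. Your added caveats about justifying stability and about applying the lemma stratum-by-stratum near the origin are reasonable but do not change the argument.
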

\begin{proof} Since $S_r$ is stable we know that $JM(S_r)=N(S_r)$ and, by the previous lemma, $F^*(JM(S_r))=JM(X)$. Since $N$ is universal,
$$
JM(X)=F^*(JM(S_r))=F^*(N(S_r))=N(X).
$$
\end{proof}

\begin{remark} In the last results we have taken the origin only for convenience. All of them are true for any other $x\in X$.
\end{remark}

As a consequence of the last result we have that the multiplicity of the pair is well-defined.

\begin{corollary} Suppose $(X,x)$ is a SEIDS, then $e(JM(X),N(X))$ is well-defined.
\end{corollary}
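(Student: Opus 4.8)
The plan is to reduce the corollary to the facts already assembled: that for a SEIDS $(X,x)$ the modules $JM(X)$ and $N(X)$ agree off the origin, and that away from the origin $X$ carries a Whitney stratification. The phrase ``well-defined'' here means that the Buchsbaum--Rim multiplicity $e(JM(X),N(X))$ makes sense, i.e. that $JM(X)$ has finite colength in $N(X)$ at $x$ (equivalently, the integral closures $\overline{JM(X)}$ and $\overline{N(X)}$ agree off a set of dimension $0$), so that the length- or intersection-theoretic definition recalled in Section~\ref{MWE} applies. So the core of the argument is to verify this finite-colength condition.

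First I would invoke the definition of a SEIDS: there is a punctured neighborhood of the origin in $X$ in which every point is symmetric essentially non-singular, meaning $F$ is transverse at such a point to the stratum of $S_r$ through $F(x)$. Next I would apply Proposition~\ref{stable} (together with the Remark immediately following it, which says the results hold at any $x\in X$, not merely the origin): at each such essentially non-singular point $x'$ we have $JM(X)_{x'}=F^*(N(S_r))_{x'}=N(X)_{x'}$ as submodules of the ambient free module. In particular $JM(X)$ and $N(X)$ coincide exactly — not just up to integral closure — on the punctured neighborhood, so the locus where they differ is contained in $\{x\}$, a set of dimension $0$. This is precisely the hypothesis needed for $e(JM(X),N(X))$ to be defined as a finite number via the formula $e(M,N)=\sum_{j=0}^{d+g-2}\int D_{M,N}\cdot C_M^{d+g-2-j}\cdot C_N^{j}$ recalled above.

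To close the argument I would note two supporting points. One: $N(X)$ has the same generic rank $g$ as $JM(X)$ on each component of $X$, again by Proposition~\ref{stable}, so the pair $(JM(X),N(X))$ genuinely satisfies the running hypotheses of the multiplicity-of-a-pair construction. Two: independence of the defining data — the multiplicity should not depend on the choice of presentation $F$ of $X$ as a section of $S_r$, nor on the representative of the germ — follows because $N(X)=F^*(N(S_r))$ is universal (it is the pull-back of the canonical module $N(S_r)$, which by stability equals $JM(S_r)$), and $JM(X)$ is the canonical Jacobian module; the Whitney property of the rank stratification of $S_r$, pulled back by a transverse $F$, guarantees the stratification $\bigcup_{i=0}^r \{_iX\setminus{}_{i-1}X\}$ of $X\setminus\{x\}$ is Whitney, which is what lets one localize the computation at $x$.

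The main obstacle — really the only substantive point — is confirming that ``transverse off the origin'' genuinely forces the two modules to be equal, and not merely to have equal integral closure, on the punctured neighborhood; but this is exactly the content of Proposition~\ref{stable} applied pointwise via the accompanying Remark, so no new work is required. Everything else is bookkeeping: checking generic ranks match and citing the finite-colength criterion. I would therefore expect the proof to be a short deduction rather than a computation.
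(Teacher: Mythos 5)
Your proposal is correct and follows essentially the same route as the paper: invoke the SEIDS transversality off the origin, apply the proposition that $JM(X)=F^*(N(S_r))=N(X)$ at every essentially non-singular point (via the accompanying remark), and conclude the two modules agree on a deleted neighborhood of $x$, which is exactly the condition making the multiplicity of the pair well-defined. The extra remarks on matching generic ranks and independence of the presentation are harmless elaborations beyond what the paper records, but the core deduction is the same.
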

\begin{proof} Since $X$ is a SEIDS, $F$ is transverse to all the strata of $S_r$ in a punctured neighborhood of the origin. Hence, by the last proposition, on a deleted neighborhood of the origin, $JM(X)=N(X)$. Since the modules agree on such deleted neighborhood, the multiplicity of the pair is well defined. 
\end{proof}

\section{Equisingularity and SEIDS}
 In \cite{EAE}, Gaffney and Ruas extended the framework of \cite{Gaff1} to the simplest non-smoothable and non-isolated singularities - the EIDS. The purpose of this section is to prove similar results for the symmetric case and, in some cases, to give a more explicit formula for the calculations in \cite{EAE}. Let us start by defining some of the invariants we will use.
 
Let us consider $X$ a symmetric determinantal variety given by $X=F^{-1}(S_r)$, and $\X$ its stabilization.  The module $JM_z(\X)$ is generated by the partial derivatives with respect to $z_1,\ldots,z_n$. The first invariant we will use is the $m_d(X)$, where $d$ is the dimension of $X$. This invariant is given by the multiplicity of the polar curve of $JM_z(\X)$ over the parameter space at the origin. In the SEIDS context, this multiplicity is the number of critical points  that a generic linear form has on the complement of the singular set on a generic fiber.

The next invariant is $F(\C^q)\cdot\Gamma_d(S_r)$, which is an intersection number. If we consider the graph of $F$ in $\C^q\times\Sh$, then $F(\C^q)\cdot\Gamma_d(S_r)$ is the intersection of the graph with $\C^q\times\Gamma_d(S_r)$. The next result follows from \cite{Gaff1} and allows us to connect this intersection number with the multiplicity calculated in the last chapter.

\begin{proposition} Suppose $S$ is a stable singularity, and $F:\C^q\longrightarrow\C^p$ is a map defining $X^d=F^{-1}(S)$ with expected codimension. Then, 
$$
\Gamma_i(N(X))=F^{-1}(\Gamma_i(S))
$$
for all $i<d$.
\end{proposition}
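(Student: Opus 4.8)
The plan is to reduce the statement to the analogous assertion for $N(X)$ replacing $\Gamma_i(S)$ by its preimage, using the fact that $N$ for a section is the pull-back of $N$ for the stable singularity $S$ (this is Proposition~\ref{stable}, or rather its rectangular analogue in \cite{Gaff1}, \cite{EAE}). First I would recall the construction of the relative polar varieties $\Gamma_i$ of a module: $\Gamma_i(M)$ is obtained by intersecting $\Projan\R(M)$ with a generic linear subspace of codimension appropriate to cut the dimension down to $i$, and then projecting back to the base. The key point is that the polar variety is defined entirely in terms of the module $M$ and the ambient coordinates, so functoriality of $\Projan\R(-)$ under pull-back is the engine of the proof.

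The main steps, in order, would be: (1) Since $S$ is stable, $N(S)=JM(S)$, so $\Gamma_i(S)=\Gamma_i(N(S))$; likewise, since $X$ has expected codimension and $F$ is a defining map, $\Projan\R(N(X))$ is the pull-back $F^*\Projan\R(N(S))$ — this is where the universality of $N$ enters, via $N(X)=F^*(N(S))$. (2) Choose the generic linear subspace $L$ used to form $\Gamma_i(S)$ inside $\C^p$; because $F$ has expected codimension and the genericity conditions are open and dense, $F^{-1}(L)$ is again generic for the purpose of forming $\Gamma_i(N(X))$, so that the slice of $\Projan\R(N(X))$ by the pull-back of $L$ equals the pull-back of the slice of $\Projan\R(N(S))$ by $L$. (3) Push forward to the base: $\Gamma_i(N(X))=\pi_{N(X)}(\Projan\R(N(X))\cap \widetilde{L})$, and since the projection commutes with $F$ (the diagram $\Projan\R(N(X))\to\C^q$ covers $\Projan\R(N(S))\to\C^p$), one gets $\Gamma_i(N(X))=F^{-1}(\Gamma_i(S))$. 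The restriction $i<d$ is exactly what guarantees the relevant polar varieties have positive codimension in the fibers and are not the whole space, so the generic slice argument is non-vacuous.

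The hard part will be step (2): verifying that a linear subspace generic for $S$ pulls back under $F$ to one generic for $N(X)$, i.e., that transversality/genericity is preserved under the section. This requires knowing that the "bad locus" where the slice fails to be generic for $\Projan\R(N(S))$ has large codimension in the relevant Grassmannian, and that $F$ — being a defining map of expected codimension — cannot force $F^{-1}$ of a generic $L$ into this bad locus. I expect this to follow from the corresponding argument in \cite{Gaff1} (indeed the statement says "the next result follows from \cite{Gaff1}"), so in the writeup I would isolate this as the one genuine input, cite it, and spend the remaining effort making the pull-back bookkeeping for $\Projan\R$ and the projection explicit in the symmetric setting. A subtlety worth flagging is that one must work with $\overline{N(X)}$ versus $N(X)$ — polar varieties only depend on the integral closure — but since $F^*$ commutes with integral closure on the relevant locus (again by the transversality hypothesis and Lemma~2.10 of \cite{EAE}), this causes no trouble.
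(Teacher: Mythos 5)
The paper supplies no proof of this proposition at all: it is stated as a consequence of \cite{Gaff1}, with no argument given. Your outline is therefore not in conflict with anything in the paper, and it reconstructs the standard argument in the right way: universality of $N$ gives $N(X)=F^*(N(S))$, so $\Projan\R(N(X))$ sits over $\Projan\R(N(S))$ and the polar variety of the pull-back can be computed by slicing upstairs and pushing down, provided the generic plane used to form $\Gamma_i(S)$ remains generic after pulling back along $F$. You correctly isolate that genericity-preservation step as the one genuine input, and since the paper itself simply defers to \cite{Gaff1} (whose rectangular analogue is worked out in \cite{EAE}), your plan matches the paper's treatment in substance. One caution: as stated, the conclusion really requires $F$ to be suitably transverse to the polar varieties $\Gamma_i(S)$ themselves, not merely to define $X$ with expected codimension; the paper only invokes a stabilization $\widetilde{F}$ transverse to $\Gamma_i(S_r)$ in the paragraph following the proposition, so in your step (2) this should be stated as a hypothesis (or arranged by replacing $F$ by a generic deformation) rather than derived from expected codimension alone, which by itself does not control the position of $F(\C^q)$ relative to the polar stratification of $S$.
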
 

If $\X$ is an one parameter stabilization, we can take the map $\widetilde{F}$ from $\C\times\C^q$ to $\Sh$, so that it is tranverse to $\Gamma_i(S_r)$. Then $F(\C^q)\cdot\Gamma_d(S_r)$ is the number of points in which $\widetilde{F}(y)$ intersects $\Gamma_d(S_r)$. This is the same as the multiplicity over $\C$ of $F^{-1}(\Gamma_d(S_r))$. Using the last proposition and the fact that $\X$ has dimension $d+1$, we conclude that $F(\C^q)\cdot\Gamma_d(S_r)$ is the same as $\mult_{\C}\Gamma_d(N(\X))$.

\begin{proposition} Suppose $\X$ is an one parameter stabilization of a SEIDS $X$ of dimension $d$. Then,
$$
e(JM(X),N(X))+F(\C^q)\cdot\Gamma_d(S_r)=m_d(X).
$$
\end{proposition}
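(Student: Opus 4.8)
The plan is to apply the Multiplicity Polar Theorem of Section~\ref{MWE} to the pair of modules $M=JM_z(\X)$ and $N=N(\X)$ on the one-parameter family $\X^{d+1}$ over $Y=\C$, and then to read off the three terms. First I would record the relevant restrictions. Since $\X(0)=X$ and the $z$-derivatives of the equations cutting out $\X$ restrict at $y=0$ to the partials of the equations of $X$, we get $M(0)=JM_z(\X)(0)=JM(X)$; and since $N$ is universal, $N(\X)=\widetilde{F}^*(N(S_r))$, whence $N(\X)(0)=F^*(N(S_r))=N(X)$ by Proposition~\ref{stable}. On a generic fiber $\X(y)$ the restriction $\widetilde{F}_y$ is transverse to the whole rank stratification (this is what it means for $\X$ to be a stabilization), so applying Proposition~\ref{stable} at each point of $\X(y)$ gives $JM(\X(y))=N(\X(y))$, i.e. $e(M(y),N(y))=0$. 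Hence $\Delta e(M,N)=e(JM(X),N(X))$.

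Next I would verify the hypotheses of the Multiplicity Polar Theorem. Because $\X$ is a stabilization of a SEIDS, $\widetilde{F}$ is transverse to the rank stratification on all of $\X$ away from the origin, so by Proposition~\ref{stable} (applied fiberwise, together with the good behavior of the family off $Y$ recorded in Proposition~\ref{good}) the integral closures $\overline{JM_z(\X)}$ and $\overline{N(\X)}$ agree off a set $C$ which is finite over $Y$ and of dimension at most $k=1$; indeed, the corollary establishing that $e(JM(X),N(X))$ is well defined shows they already agree off $\{0\}$ on the central fiber. The secondary condition $C(\Projan\R(M))(0)=C(\Projan\R(M(0)))$ away from the points over $0\in\X(0)$ is exactly the no-splitting statement controlled by the good-family hypothesis (cf. the discussion following Proposition~\ref{good}), and is checked as in \cite{EAE}. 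The theorem then yields
$$\Delta e(M,N)=\mult_y\Gamma_d(JM_z(\X))-\mult_y\Gamma_d(N(\X)).$$

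Finally I would identify the two polar multiplicities. By the definition of $m_d(X)$ given at the beginning of this section, $\mult_y\Gamma_d(JM_z(\X))=m_d(X)$. By the discussion immediately preceding the statement --- using $\Gamma_d(N(\X))=\widetilde{F}^{-1}(\Gamma_d(S_r))$ and the transversality of $\widetilde{F}$ to $\Gamma_d(S_r)$ --- we have $\mult_y\Gamma_d(N(\X))=F(\C^q)\cdot\Gamma_d(S_r)$. Substituting and using $\Delta e(M,N)=e(JM(X),N(X))$ gives $e(JM(X),N(X))=m_d(X)-F(\C^q)\cdot\Gamma_d(S_r)$, which rearranges to the assertion. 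The step I expect to be the main obstacle is the verification of the hypotheses of the Multiplicity Polar Theorem --- pinning down the set $C$ where the two integral closures differ and checking the $C(\Projan\R(-))$ condition --- since this is where the geometry of the stabilization and the good-family hypothesis genuinely enter; once these are in place the identity is a formal consequence.
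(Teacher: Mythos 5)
Your proposal is correct and follows essentially the same route as the paper's proof: apply the Multiplicity Polar Theorem to the pair $(JM_z(\X),N(\X))$, identify $\mult_{\C}\Gamma_d(N(\X))$ with $F(\C^q)\cdot\Gamma_d(S_r)$ and $\mult_{\C}\Gamma_d(JM_z(\X))$ with $m_d(X)$, and use Proposition \ref{stable} on the generic fiber to get $\Delta e(JM_z(\X),N(\X))=e(JM(X),N(X))$. The only difference is that you spell out the verification of the theorem's hypotheses (the set $C$ and the $C(\Projan\R(M))$ condition), which the paper leaves implicit.
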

\begin{proof} As we have stated, $F(\C^q)\cdot\Gamma_d(S_r)=\mult_{\C}\Gamma_d(N(\X))$. Now apply the multiplicity polar theorem to the pair $JM_z(\X)$ and $N(\X)$ to get:
$$
\begin{array}{rcl}
\Delta e(JM_z(\X),N(\X))&=&\mult_{\C}\Gamma_d(JM_z(\X))-\mult_{\C}\Gamma_d(N(\X))\\
                            &=&m_d(X)-F(\C^q)\cdot\Gamma_d(S_r).
\end{array}
$$
Now, we only need to prove that $\Delta e(JM_z(\X),N(\X))=e(JM(X),N(X))$. By definition we know that
$$
\Delta e(JM_z(\X),N(\X)=e(JM(\X),N(\X),0)-e(JM(\X),N(\X),y).
$$
Since $\X$ is a SEIDS, by proposition \ref{stable} we have that $JM(\X(y))=N(\X(y))$, which means $e(JM(\X),N(\X),y)=0$. Therefore,
$$
\Delta e(JM_z(\X),N(\X))=e(JM(\X),N(\X),0)=e(JM(X),N(X)).
$$
\end{proof}

For the case where $X=F^{-1}(S_{n-2})$ we can give a more explicit formula to calculate the invariant $m_d(X)$. This is due to the calculations of the mixed polars in the last chapter. This result will be given in the corollary.

\begin{corollary} Suppose $\X$ is family of SEIDS, with $\dim X=d$. Then,
\begin{enumerate}
\item If $n\leq d+2$,
$$
m_d(X)= e(JM(X),N(X)) + \frac{1}{2}\sum_{i=0}^{d+2}\left(\begin{matrix}
									d+2\\
									i		
								\end{matrix}\right)\left(\sum_{l=0}^{n-i}(-1)^{l+1}\colength I_{A_l}\right);
$$
\item If $n>d+2$ and $d-i$ is odd, 
$$
m_d(X)=
e(JM(X),N(X)) + \frac{1}{2}\sum_{i=0}^{d+2}\left(\begin{matrix}
									d+2\\
									i		
								\end{matrix}\right)\left(\sum_{l=0}^{d+3-i}(-1)^{l+1}\colength I_{A_l}\right);
$$.
\item If $n>d+2$ and $d-i$ is even, 
\begin{eqnarray*}
\lefteqn{m_d(X)=e(JM(X),N(X))+}\\
& &+\frac{1}{2}\sum_{i=0}^{d+2}\left(\begin{matrix}
									d+2\\
									i		
								\end{matrix}\right)\left[\left(\sum_{l=0}^{d+3-i}(-1)^{l+1}\colength I_{A_l}\right)-\colength I_{A_{d+3-i}}\right].
\end{eqnarray*}
\end{enumerate}
\end{corollary}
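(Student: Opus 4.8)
The plan is to reduce everything to the preceding proposition, which already gives
$$
m_d(X)=e(JM(X),N(X))+F(\C^q)\cdot\Gamma_d(S_{n-2}),
$$
since here $r=n-2$. Thus the only thing to be proved is the explicit evaluation of the intersection number $F(\C^q)\cdot\Gamma_d(S_{n-2})$ in terms of the colengths $\colength I_{A_l}$. As noted just before the proposition, after passing to a one-parameter stabilization $\widetilde{F}$ made transverse to $\Gamma_d(S_{n-2})$, this intersection number is the number of points in which a generic fiber $\widetilde{F}(y)$ meets $\Gamma_d(S_{n-2})$, equivalently $\mult_{\C}\Gamma_d(N(\X))$; transversality is available because $\X$ is a good family of SEIDS. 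So the corollary is purely a computation of the degree of the pulled-back polar cycle, and the term $e(JM(X),N(X))$ is carried along unchanged from the proposition.

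The next step is to substitute the description of the polar variety $\Gamma_d(S_{n-2})$ coming from the mixed-polar computations established earlier for the generic symmetric rank singularity. There $\Gamma_d(S_{n-2})$ — or rather the cycle whose degree we need — is expressed as a combination of pieces indexed by the loci $A_l$, each contributing with multiplicity $\colength I_{A_l}$; the alternating signs $(-1)^{l+1}$ encode the inclusion–exclusion coming from the stratification $S_{n-2}=\bigcup_{i}S_i\setminus S_{i-1}$, and the overall factor $\tfrac12$ is the by-now-familiar halving attached to the symmetric determinantal equations (the source of the $\tfrac12$ throughout Part I). Pulling this back under the transverse $\widetilde{F}$ multiplies degrees but introduces no new correction terms, by goodness.

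I would then expand the Segre/Chern-class expression for the degree of this cycle. The binomial coefficient $\binom{d+2}{i}$ is the bookkeeping of that expansion: one has two tautological bundles in play (one from $JM_z(\X)$, one from $N(\X)$) and a power of their sum of total degree $d+2=\dim\X+1$ to expand by the binomial theorem, exactly as in the multiplicity-of-a-pair formula recalled in Section~\ref{MWE}; the outer index $i$ records how many factors of one bundle are chosen, and the inner index $l$ runs over the stratum number. Collecting terms gives a double sum $\sum_{i}\binom{d+2}{i}\sum_{l}(-1)^{l+1}\colength I_{A_l}$, and it remains only to identify the range of the inner sum, which is where the three cases come from. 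When $n\le d+2$ there is enough room on the polar variety for all strata $S_0,\dots,S_{n-2}$ to contribute, so $l$ runs up to $n-i$, giving (1). When $n>d+2$ the polar variety has too small a dimension for the top strata, so the naturally longer alternating sum is truncated at $l=d+3-i$; if the last surviving term has the favorable parity one gets (2), and if $d-i$ is even the truncation over-counts the top piece by exactly one copy, producing the extra $-\colength I_{A_{d+3-i}}$ in (3).

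The main obstacle I expect is precisely the last paragraph: justifying the truncation bound $l\le d+3-i$ and the parity correction in case (3) rigorously from the dimension count on the stratified space $S_{n-2}$, and checking that pulling back under a generic transverse $F$ really leaves the cycle structure intact (no excess-intersection contributions, no strata of $F^{-1}(S_{n-2})\setminus\{0\}$ forcing corrections). Everything else — the reduction to the polar intersection number, the binomial expansion, and the inclusion–exclusion signs — is formal given the proposition and the earlier mixed-polar calculations.
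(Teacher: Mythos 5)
Your overall reduction is exactly the paper's: apply the preceding proposition to get $m_d(X)=e(JM(X),N(X))+F(\C^q)\cdot\Gamma_d(S_{n-2})$, identify the intersection number with $\mult_{\C}\Gamma_d(N(\X))$, and then evaluate that multiplicity. Where you diverge is in how the evaluation is carried out. The paper does not perform any Chern-class or binomial expansion at this point; it invokes the identity
$$
\mult_{\C}\Gamma_d(N(\X))=\frac{1}{2}\sum_{i=0}^{d+2}\binom{d+2}{i}\deg_{\C}\Gamma_{i,d+2-i}(N(\X)),
$$
which decomposes the polar of the single module $N(\X)$ into its \emph{mixed} polars $\Gamma_{i,d+2-i}(N(\X))$ --- both indices refer to $N(\X)$, so your description of the coefficient $\binom{d+2}{i}$ as counting choices between a tautological bundle of $JM_z(\X)$ and one of $N(\X)$ is not what is happening --- and then it substitutes the closed-form degrees of these mixed polars computed in Part I \cite{P1}. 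In particular, the inner alternating sums $\sum_l(-1)^{l+1}\colength I_{A_l}$, the truncation of the inner sum at $l=d+3-i$ when $n>d+2$, and the extra term $-\colength I_{A_{d+3-i}}$ in the even-parity case are precisely the content of those Part I calculations; they are imported by citation, not re-derived here from an inclusion--exclusion over the strata of $S_{n-2}$ or from a dimension count as you propose. So the ``main obstacle'' you flag is genuine for a self-contained argument, but the paper's proof sidesteps it entirely; to close the gap you should quote (or reproduce) the mixed-polar degree formulas of \cite{P1} rather than attempt the expansion you sketch, whose sign and truncation heuristics are speculative and would need independent justification.
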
 
\begin{proof}
Follows from the last proposition, from the fact that 
$$
\mult_{\C}\Gamma_d(N(\X))=\frac{1}{2}\sum_{i=0}^{d+2}\left(\begin{matrix}
									d+2\\
									i		
								\end{matrix}\right)\deg_{\C}\Gamma_{i,d+2-i}(N(\X)),
$$ 
and from the calculations of the degrees of the mixed polars in \cite{P1}.
\end{proof}

Now, our final goal is to connect the previous results with Whitney equisingularity. A Family $\X$ is Whitney equisingular if given a Whitney stratification of $\X$, each stratum satisfies the Whitney conditions over the parameter $Y$. Consider $\X$ a stabilization of $X$ given by the map
$$
\begin{array}{lclc}
\widetilde{F}: & \mathbb{C}^q\times Y & \longrightarrow & \Sh\\
					& (z,y) & \longmapsto & \widetilde{F}(z,y)
\end{array}
$$
and the modules $JM_z(\X)=\left(\frac{\partial\widetilde{F}}{\partial z_1},\ldots,\frac{\partial\widetilde{F}}{\partial z_q}\right)$ and $m_Y=(z_1,\ldots,z_q)$. For each $y\in Y$ and $i=1,\ldots,r$, consider $_i\X(y)$ the fiber of $_i\X$ at $y$, $d(i)$ the dimension of the symmetric determinantal variety $_iX$ and $\widetilde{f}_y$ the map given by fixing $y$ in $\widetilde{F}(z,y)$. The next result analyzes the relation between the invariant
$$
e(m_YJM(_i\X(y)),N(_i\X(y)))+\widetilde{f}_y(\C^q)\cdot\Gamma_{d(i)}(S_i)
$$
and the condition that the family is Whitney equisingular.

\begin{theorem}\label{goodfamily} Suppose $\X$ is a good $k$-dimensional family of SEIDS. Then the family is Whitney equisingular if and only if the invariants
$$
e(m_YJM(_i\X(y)),N(_i\X(y)))+\widetilde{f}_y(\C^q)\cdot\Gamma_{d(i)}(S_i)
$$
are independent of $y$, where, if $\dim _i\X(y)=0$, by convention the multiplicity of the pair $e(m_YJM(_i\X(y)),N(_i\X(y)))$ is $0$. 
\end{theorem}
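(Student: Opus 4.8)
The plan is to reduce the statement to the Whitney equisingularity criterion for stratified families established in the Gaffney--Rangachev framework (as adapted in \cite{EAE}), namely that Whitney equisingularity of $\X$ along $Y$ is equivalent to the independence of $y$ of the multiplicities of the pairs $(m_YJM_z({}_i\X(y)),N({}_i\X(y)))$ for each stratum ${}_i\X\setminus Y$, together with the independence of the polar multiplicities $m_{d(i)}$ of the strata. Since Proposition \ref{good} tells us that, for a good family, $\{{}_i\X\setminus Y\}$ is already a locally analytically trivial — hence Whitney — stratification of $\X\setminus Y$, the only conditions that can fail are those along $Y$ itself; so the content is to match, stratum by stratum, the classical numerical criterion against the displayed invariant.

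First I would fix one index $i$ and work with the SEIDS ${}_iX = F^{-1}(S_i)$ of dimension $d(i)$ and its stabilization ${}_i\X$. The key identity to install is the stratum-wise analogue of the one-parameter proposition proved just above: by the Multiplicity Polar Theorem applied to the pair $JM_z({}_i\X)$ and $N({}_i\X)$ relative to the $k$-dimensional base $Y$, one gets
$$
\Delta e(JM_z({}_i\X),N({}_i\X)) = \mult_y\Gamma_{d(i)}(JM_z({}_i\X)) - \mult_y\Gamma_{d(i)}(N({}_i\X)),
$$
where the first term is $m_{d(i)}({}_iX)$ and, by Proposition \ref{stable} together with the identification $\Gamma_j(N({}_iX)) = F^{-1}(\Gamma_j(S_i))$, the second term equals $\widetilde{f}_y(\C^q)\cdot\Gamma_{d(i)}(S_i)$. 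Because the family is good, ${}_i\X(y)$ is a SEIDS for every $y$ near $0$, so Proposition \ref{stable} gives $JM({}_i\X(y)) = N({}_i\X(y))$ off the origin, hence $e(JM({}_i\X),N({}_i\X),y)=0$ and therefore $\Delta e = e(JM({}_iX),N({}_iX),0)$. To pass from $JM_z$ to $m_YJM$ one uses the principle of adding a parameter and the fact that multiplication by $m_Y$ corresponds to intersecting with the diagonal in the base direction, exactly as in the hypersurface reformulation recalled in Theorem \ref{WC}: the inequality $\|\partial F/\partial y_l\| \le C\sup\|z_i\,\partial F/\partial z_j\|$ is precisely the statement that the Whitney/Verdier $W$ condition for the stratum over $Y$ is governed by the module $m_YJM_z$. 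This yields, for each $i$,
$$
e(m_YJM({}_i\X(0)),N({}_i\X(0))) + \widetilde{f}_0(\C^q)\cdot\Gamma_{d(i)}(S_i) = \Delta e(m_YJM_z({}_i\X),N({}_i\X)) + \text{(lower strata corrections)} ,
$$
and the same with $0$ replaced by a generic $y$; constancy of the left side across $y$ is then equivalent, by the Gaffney--Rangachev criterion, to the pair $(\overline{{}_i\X\setminus Y}, Y)$ being Whitney over $Y$.

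Next I would run the usual induction on the strata, from the smallest ($i=1$, the deepest singular locus) upward to $i=r$, and finally to the top stratum $\X\setminus{}_r\X$ (the smooth part of the fibers off the determinantal locus, whose Whitney condition over $Y$ is controlled by $e(m_YJM_z(\X),N(\X))$ with $N$ the Jacobian module of the ambient smooth germ — here $F(\C^q)\cdot\Gamma_d(S_r)$ absorbs the $N$ contribution). The convention that $e(m_YJM({}_i\X(y)),N({}_i\X(y)))=0$ when $\dim{}_i\X(y)=0$ is exactly what is needed so that a zero-dimensional stratum contributes nothing, consistent with the fact that an isolated point of the base is trivially Whitney over $Y$. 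For the forward direction (Whitney equisingular $\Rightarrow$ invariants constant), one invokes that Whitney $W$ implies the integral-closure conditions $\overline{m_YJM_z} \supset$ the relevant data uniformly, forcing the multiplicities of all the pairs to be constant; for the converse, constancy of the multiplicities forces, via the Multiplicity Polar Theorem and the principle of specialization of integral dependence, the genericity of the polar varieties and hence conditions $A$ and $B$ at each point of $Y$ for each stratum.

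The main obstacle I anticipate is the bookkeeping of the ``lower strata corrections'': ${}_i\X$ is singular along ${}_{i-1}\X$, so $JM_z({}_i\X)$ and $N({}_i\X)$ agree only off ${}_{i-1}\X\cup Y$, and the Multiplicity Polar Theorem must be applied in the relative setting where the non-free locus $C$ has the right dimension and is finite over $Y$ — this is precisely where the hypothesis that $\X$ is \emph{good} is used, since it rules out extra components of $C$ through the origin (the second consequence noted after Proposition \ref{good}). Verifying that the polar curve $\Gamma_{d(i)}$ of $N({}_i\X)$ pulls back correctly from $\Gamma_{d(i)}(S_i)$ even though ${}_i\X$ is not of expected codimension as a subset of the ambient space — only as a section of $S_i$ — requires carefully citing the universality and stability of $N$ (Proposition \ref{stable}) rather than a naive codimension count, and checking that the proposition ``$\Gamma_i(N(X))=F^{-1}(\Gamma_i(S))$ for $i<d$'' applies at each level. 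Once these transversality and dimension hypotheses are secured, the equivalence follows formally from the established machinery; I would therefore allocate most of the write-up to justifying the applicability of the Multiplicity Polar Theorem at every stratum and to the inductive assembly, and keep the numerical manipulations brief by referring to \cite{EAE} and \cite{Gaff1}.
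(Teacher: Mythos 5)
Your overall architecture matches the paper's: goodness (Proposition \ref{good}) reduces the problem to the pairs of strata $\left(({}_i\X - Y),Y\right)$; Theorem \ref{WC} together with Teissier's results translates the Whitney conditions for such a pair into the emptiness of the polar $\Gamma_{d(i)}(m_YJM_z({}_i\X))$; and the Multiplicity Polar Theorem applied to the pair $(m_YJM_z({}_i\X), N({}_i\X))$, with $\mult_Y\Gamma_{d(i)}(N({}_i\X))$ identified with the change in $\widetilde{f}_y(\C^q)\cdot\Gamma_{d(i)}(S_i)$, converts that emptiness into constancy of the displayed invariant. So the route is the paper's route.

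There is, however, a genuine muddle in your central computation. You first apply the Multiplicity Polar Theorem to the pair $(JM_z({}_i\X), N({}_i\X))$, invoke Proposition \ref{stable} to conclude $e(JM({}_i\X),N({}_i\X),y)=0$ and hence $\Delta e = e(\cdot,0)$, and then try to ``pass from $JM_z$ to $m_YJM$'' by an unspecified parameter/diagonal argument, leaving an undefined ``lower strata corrections'' term in your key identity. That vanishing is false for the pair that actually occurs in the statement: $e(m_YJM({}_i\X(y)),N({}_i\X(y)))$ at a generic $y$ is in general \emph{not} zero, since the fiber ${}_i\X(y)$ is still singular at its origin and $m_YJM\neq JM$ along $V(m_Y)$ --- which is precisely why the theorem asks for constancy of the invariant rather than for a particular value. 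The paper instead applies the Multiplicity Polar Theorem directly to $(m_YJM_z({}_i\X), N({}_i\X))$, keeps both the $y=0$ and generic-$y$ terms, and obtains that the difference of the displayed invariant at $0$ and at a generic $y$ equals $\mult_Y\Gamma_{d(i)}(m_YJM_z({}_i\X))$ exactly, with no correction term and no induction over strata: goodness guarantees the two modules have the same integral closure off a set finite over $Y$, so each pair $\left(({}_i\X-Y),Y\right)$ is handled independently and nothing from ${}_{i-1}\X$ contributes. Once you replace your identity by this one, the rest of your argument (nonnegativity of the polar multiplicity; Teissier's equidimensionality of the fiber of the blow-up of the conormal modification by $m_Y$ for the forward direction; and the if-and-only-if character of every intermediate equivalence) goes through as in the paper.
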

\begin{proof}
We can embed the family into a larger one with base $Y\times \C$, so that $y\times\C$ is a stabilization of $\X_y$.
 
Let $\{(_i\X\backslash_{i-1}\X)-Y\}$ be the canonical stratification of $\X-Y$. Since the family is good, by proposition \ref{good} we only need to control the pairs of strata $\left((_i\X - Y),Y\right)$. For each $i$, suppose that the pair $\left((_i\X - Y),Y\right)$ satisfies the Whitney conditions. By theorem \ref{WC} we know that $JM_Y(_i\X)\subset\overline{m_YJM_z(_i\X)}$, which implies that $\overline{m_YJM(_i\X)}=\overline{m_YJM_z(_i\X)}$. In \cite{MPSPCW}, Tessier proved that Whitney conditions implies that the fiber of the blow-up of the conormal modification by $m_Y$ is equidimensional over $Y$, which means that the dimension of the fiber is as small as possible. The integral closure condition $\overline{m_YJM(_i\X)}=\overline{m_YJM_z(_i\X)}$ implies that the same is true for $\B_{m_Y}(\Projan\R(JM_z(_i\X)))$. Therefore, the polar variety of codimension $d(i)$ of $m_YJM_z(_i\X)$ is empty. This implies that the value of $m_{d(i)}(_i\X_y)$ is independent of $y$.
 
Now assume the invariants $m_{d(i)}(_i\X_y)$ are constant.  By the Multiplicity Polar theorem, we have
$$
\begin{matrix}
\Delta e(m_YJM_z(_i\X),N(_i\X))=\mult_Y\Gamma_{d(i)}(m_YJM_z(_i\X))-\mult_Y\Gamma_{d(i)}(N(_i\X))\\
\Downarrow\\
\begin{array}{l}
e(m_YJM(_i\X(0)),N(_i\X(0)))-e(m_YJM(_i\X(y)),N(_i\X(y)))= \\
=\mult_Y\Gamma_{d(i)}(m_YJM_z(_i\X))-\widetilde{f}_0(\C^q)\cdot\Gamma_{d(i)}(S_i)+\widetilde{f}_y(\C^q)\cdot\Gamma_{d(i)}(S_i)\\
\end{array}\\
\Downarrow\\
\begin{array}{c}
\left(e(m_YJM(_i\X(0)),N(_i\X(0)))+\widetilde{f}_0(\C^q)\cdot\Gamma_{d(i)}(S_i)\right)\\
-\left(e(m_YJM(_i\X(y)),N(_i\X(y)))+\widetilde{f}_y(\C^q)\cdot\Gamma_{d(i)}(S_i)\right)=\\
=\mult_Y(\Gamma_{d(i)}(m_YJM_z(_i\X))\\
\end{array}\\
\Downarrow\\
0=\mult_Y(\Gamma_{d(i)}(m_YJM_z(_i\X)).
\end{matrix}
$$
 
Hence the polar $\Gamma_{d(i)}(m_YJM_z(_i\X))$ is empty.
 
Since all the results presented here, from theorem \ref{WC} to Tessier's paper \cite{MPSPCW}, are if and only if results we are done. It is important to notice that when $\dim _i\X=0$ the invariant in question is reduced to $\widetilde{f}_y(\C^q)\cdot\Gamma_0(S_i)$, which is the colength of the defining ideals.
\end{proof}
For the next result, let us consider the following set-up. Consider the symmetric determinantal variety determined by $X=F^{-1}(S_{n-2})$, where $F$ is the map from $\C^q$ to $\Sh$. Consider $q\leq 6$ so that $(X,0)$ has an isolated singularity. 

\begin{theorem} Let $\X$ be a family of isolated singularities as in the given set-up. Then,
\begin{enumerate}
\item If $X$ has a smoothing, then $\X$ is Whitney equisingular if and only if the invariant
$$
e(m_YJM(\X(y)),N(\X(y)))+\frac{1}{2}\sum_{i=0}^{d+2}\left(\begin{matrix}
									d+2\\
									i		
								\end{matrix}\right)\deg_{\C}\Gamma_{i,d+2-i}(N(\X))
$$
is independent of $y$. 
\item If $X$ does not have a smoothing, then $\X$ is Whitney equisingular if and only if the invariant from before and the colength of the ideal defining $_{n-3}\X$, that is, $\widetilde{f}_y(\C^q)\cdot\Gamma_0(S_{n-3})$ are independent of $y$.  
\end{enumerate}
\end{theorem}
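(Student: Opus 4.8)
The plan is to specialize Theorem \ref{goodfamily} to the situation described in the set-up and then to replace the intersection number $\widetilde{f}_y(\C^q)\cdot\Gamma_{d(i)}(S_i)$ by the mixed-polar-degree expression coming from \cite{P1}. First I would observe that because $q \le 6$ the variety $X = F^{-1}(S_{n-2})$ has an isolated singularity at the origin, so the family $\X$ is automatically a good family of SEIDS: the only stratum of the canonical stratification $\{(_i\X\backslash\,_{i-1}\X) - Y\}$ that can fail transversality near the origin is the one through $Y$, and the isolated-singularity hypothesis forces the deeper strata $_i\X$ with $i < n-2$ to be empty away from the origin (their expected codimension exceeds $q$). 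Hence Theorem \ref{goodfamily} applies, and Whitney equisingularity of $\X$ is equivalent to constancy in $y$ of the finite list of invariants $e(m_YJM(_i\X(y)),N(_i\X(y)))+\widetilde{f}_y(\C^q)\cdot\Gamma_{d(i)}(S_i)$, $i = n-2$ down to wherever $_i\X$ becomes empty.

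Next I would identify exactly which $i$ contribute. The top stratum $i = n-2$ always contributes the invariant $e(m_YJM(\X(y)),N(\X(y)))+\widetilde{f}_y(\C^q)\cdot\Gamma_{d}(S_{n-2})$. For the next stratum $i = n-3$, the variety $_{n-3}\X = \widetilde{F}^{-1}(S_{n-3})$ has expected dimension $q + k - \frac{(n-(n-3)+1)(n-(n-3)+2)}{2} = q+k-6$; in the relative set-up over $Y$ the relevant fiber dimension is $q - 6$, which is $< 0$ precisely when $X$ has a smoothing (Proposition on smoothings) and $= 0$ in the non-smoothable boundary case $q = 6$. So in case (1) no further stratum is present and only the top invariant survives; in case (2) there is an additional $0$-dimensional stratum $_{n-3}\X$, whose contribution, by the convention in Theorem \ref{goodfamily} that the pair multiplicity vanishes in dimension $0$, reduces to $\widetilde{f}_y(\C^q)\cdot\Gamma_0(S_{n-3})$, i.e. the colength of the ideal defining $_{n-3}\X$. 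All strata $_i\X$ with $i < n-3$ are empty for $q \le 6$ and contribute nothing.

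Finally I would rewrite the surviving top-stratum intersection number. As explained in the paragraph preceding the Corollary on $m_d(X)$, one has $\widetilde{f}_y(\C^q)\cdot\Gamma_d(S_{n-2}) = \mult_{\C}\Gamma_d(N(\X_y))$, and the computation of the mixed polars in \cite{P1} gives $\mult_{\C}\Gamma_d(N(\X)) = \frac{1}{2}\sum_{i=0}^{d+2}\binom{d+2}{i}\deg_{\C}\Gamma_{i,d+2-i}(N(\X))$. Substituting this into the constant-invariant criterion yields exactly the stated expression $e(m_YJM(\X(y)),N(\X(y))) + \frac{1}{2}\sum_{i=0}^{d+2}\binom{d+2}{i}\deg_{\C}\Gamma_{i,d+2-i}(N(\X))$ in part (1), and the same expression together with the colength of the ideal defining $_{n-3}\X$ in part (2). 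Since every implication used — Theorem \ref{WC}, Teissier's equivalence in \cite{MPSPCW}, the Multiplicity Polar Theorem, and the polar formula of \cite{P1} — is an equivalence, the criterion is both necessary and sufficient.

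The main obstacle I expect is bookkeeping rather than a deep new idea: one must be careful that the "good family" hypothesis of Theorem \ref{goodfamily} is genuinely automatic here and that the relative dimension counts over $Y$ correctly pin down which strata are nonempty, distinguishing the smoothable case $q < 6$ from the borderline non-smoothable case $q = 6$ where a single extra $0$-dimensional stratum appears. A secondary point requiring care is that the identification $\widetilde{f}_y(\C^q)\cdot\Gamma_d(S_{n-2}) = \mult_{\C}\Gamma_d(N(\X_y))$ and the mixed-polar degree formula from \cite{P1} are valid for the restricted map $\widetilde f_y$ uniformly in $y$, so that constancy of the combined invariant really is equivalent to constancy of $m_d(\X_y)$.
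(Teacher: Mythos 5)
Your proposal is correct and follows essentially the same route as the paper: specialize Theorem \ref{goodfamily} to the relevant strata (only the top stratum when $q<6$; the additional zero-dimensional stratum $_{n-3}\X$ when $q=6$) and substitute the mixed-polar expression from \cite{P1} for $\widetilde{f}_y(\C^q)\cdot\Gamma_{d}(S_{n-2})$. The only blemish is a transcription slip in your codimension formula for $S_{n-3}$ (the displayed expression evaluates to $10$, not $6$), but you use the correct value $6$ in the actual dimension count, consistent with the paper's smoothability criterion.
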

\begin{proof}
\begin{enumerate}
\item Since $X$ has a smoothing, all the fibers $\X(y)$ with $y\neq 0$ are smooth. This means that the only pair of stratum in this case is $(\X-Y,Y)$. Proceeding as in the last theorem and using the fact that for all $y\in Y$
$$
\widetilde{f}_y(\C^q)\cdot\Gamma_{d}(S_{n-2})=\frac{1}{2}\sum_{i=0}^{d+2}\left(\begin{matrix}
									d+2\\
									i		
								\end{matrix}\right)\deg_{\C}\Gamma_{i,d+2-i}(N(\X))
$$
we have that $\X$ is Whitney equisingular if and only if the given invariant is constant.
\item Since $X$ does not have a smoothing we have $q=6$ and\linebreak $\dim\widetilde{f}_y^{-1}(S_{n-3})=0$. In this case, we have two pairs of strata to control: $(\X-Y,Y)$ and $(_{n-3}\X-Y,Y)$. The last pair is a dimension zero type of stratum and is controlled only by $\widetilde{f}_y(\C^q)\cdot\Gamma_0(S_{n-3})$. The result follows from the first part of the theorem and the last theorem. 
\end{enumerate}
\end{proof}

For the case where the one parameter stabilization $\X=\widetilde{F}^{-1}(S_{n-2})$ is not a family of isolated singularities we can still state something about the maximal open strata.

\begin{theorem} Let $\X$ be a family of SEIDS, and $\X_0$ the set of its smooth points. Then, the pair $(\X_0-Y,Y)$ satisfies the Whitney conditions if and only if the invariant
$$
e(m_YJM(\X(y)),N(\X(y)))+\frac{1}{2}\sum_{i=0}^{d+2}\left(\begin{matrix}
									d+2\\
									i		
								\end{matrix}\right)\deg_{\C}\Gamma_{i,d+2-i}(N(\X))
$$
is independent of $y$.
\end{theorem}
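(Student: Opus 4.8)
The plan is to specialize the argument of Theorem \ref{goodfamily} to the single pair of strata $(\X_0-Y,Y)$, where $\X_0$ is the smooth locus of $\X$, and then to rewrite the intersection number $\widetilde{f}_y(\C^q)\cdot\Gamma_d(S_{n-2})$ by means of the mixed-polar identity. Here we do \emph{not} assume a good family and do \emph{not} invoke Proposition \ref{good}: we are asked only about the pair formed by the smooth part of $\X$ and the parameter space $Y$, and such a pair is governed directly by Theorem \ref{WC}. As in the proof of Theorem \ref{goodfamily}, I would first enlarge the base to $Y\times\C$ so that the slices $y\times\C$ are one-parameter stabilizations of the fibers $\X_y$; via Proposition \ref{stable} this puts $N(\X)=\widetilde{F}^*(N(S_{n-2}))$ and $\Gamma_d(N(\X))=\widetilde{F}^{-1}(\Gamma_d(S_{n-2}))$ at our disposal, and one checks, exactly as there, that the pair $(m_YJM_z(\X),N(\X))$ meets the hypotheses of the Multiplicity Polar theorem — integral closures agreeing off a set of dimension $k$ finite over $Y$ — because $\X$ is a SEIDS (so $JM=N$ off the singular locus by Proposition \ref{stable}) and $m_Y$ is the unit ideal away from $Y$.

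Assume first that $(\X_0-Y,Y)$ satisfies the Whitney conditions. By Theorem \ref{WC}, $JM_Y(\X)\subset\overline{m_YJM_z(\X)}$, hence $\overline{m_YJM(\X)}=\overline{m_YJM_z(\X)}$. By Teissier's theorem in \cite{MPSPCW}, the Whitney conditions force the fiber of the blow-up of the conormal modification by $m_Y$ to be equidimensional over $Y$, and the integral-closure equality transfers this to $\B_{m_Y}(\Projan\R(JM_z(\X)))$; therefore the polar variety $\Gamma_d(m_YJM_z(\X))$ is empty. Feeding $\mult_Y\Gamma_d(m_YJM_z(\X))=0$ into the Multiplicity Polar theorem for the pair $(m_YJM_z(\X),N(\X))$ and expanding $\Delta e$ exactly as in the proof of Theorem \ref{goodfamily} yields that $e(m_YJM(\X(y)),N(\X(y)))+\widetilde{f}_y(\C^q)\cdot\Gamma_d(S_{n-2})$ is independent of $y$; rewriting
$$\widetilde{f}_y(\C^q)\cdot\Gamma_d(S_{n-2})=\frac{1}{2}\sum_{i=0}^{d+2}\binom{d+2}{i}\deg_\C\Gamma_{i,d+2-i}(N(\X))$$
by the mixed-polar computations of \cite{P1}, as in the Corollary above, gives precisely the stated invariant.

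Conversely, assume the invariant is constant. The Multiplicity Polar theorem for $(m_YJM_z(\X),N(\X))$ reads $\Delta e(m_YJM_z(\X),N(\X))=\mult_Y\Gamma_d(m_YJM_z(\X))-\mult_Y\Gamma_d(N(\X))$; expanding $\Delta e$ and $\mult_Y\Gamma_d(N(\X))$ as in Theorem \ref{goodfamily} and substituting the mixed-polar identity, constancy of the invariant collapses the identity to $0=\mult_Y\Gamma_d(m_YJM_z(\X))$, so the polar $\Gamma_d(m_YJM_z(\X))$ is empty. Since Theorem \ref{WC} and Teissier's characterization in \cite{MPSPCW} are both equivalences, emptiness of this polar — together with the integral-closure equality it forces — returns the Whitney conditions for $(\X_0-Y,Y)$. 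The step I expect to require the most care is verifying that $(m_YJM_z(\X),N(\X))$ genuinely satisfies the hypotheses of the Multiplicity Polar theorem in the absence of the good-family assumption, so that the polar term behaves as claimed and the fiber of $C$ over $0$ is the expected set; this is handled as in \cite{EAE} and in the proof of Theorem \ref{goodfamily}, together with the convention that when $\dim\X(y)=0$ the pair multiplicity vanishes and the invariant reduces to $\widetilde{f}_y(\C^q)\cdot\Gamma_0(S_{n-2})$.
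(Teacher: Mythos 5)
Your proposal is correct and follows essentially the same route as the paper: the paper's proof simply says it is the argument of Theorem \ref{goodfamily} restricted to the maximal stratum, combined with the identity $\widetilde{f}_y(\C^q)\cdot\Gamma_{d}(S_{n-2})=\frac{1}{2}\sum_{i=0}^{d+2}\binom{d+2}{i}\deg_{\C}\Gamma_{i,d+2-i}(N(\X))$ from the mixed-polar computations. You have merely written out the details (both directions via Theorem \ref{WC}, Teissier's equidimensionality criterion, and the Multiplicity Polar theorem) that the paper leaves implicit.
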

\begin{proof} The proof is similar to theorem \ref{goodfamily} to the maximal stratum, and follows from the fact that for all $y\in Y$
$$
\widetilde{f}_y(\C^q)\cdot\Gamma_{d}(S_{n-2})=\frac{1}{2}\sum_{i=0}^{d+2}\left(\begin{matrix}
									d+2\\
									i		
								\end{matrix}\right)\deg_{\C}\Gamma_{i,d+2-i}(N(\X)).
$$ 
\end{proof}

 \end{document}